\definecolor{c}{rgb}{0.9,0.3,0.1}
\definecolor{b}{rgb}{0.1,0.3,0.9}
\newcommand{\qed}{\hphantom{.}\hfill $\Box$\medbreak}
\newtheorem{theorem}{Theorem}[section]
\newtheorem{lemma}[theorem]{Lemma}
\newtheorem{proposition}[theorem]{Proposition}
\newtheorem{corollary}[theorem]{Corollary}
\newtheorem{remark}[theorem]{Remark}
\font\tenmsbm=msbm10\textfont
\font\sevenmsbm=msbm7
\def\bb#1{{\fam\msbmfam #1}}
\def\EE{\mathbb{E}}\def\KK{\bb K}
\def\RR{\mathbb{R}}
\def\bR{\mathbb{R}}
\def\cB{\mathcal{B}}
\def\cF{\mathcal{F}}\def\cG{\mathcal{G}}
\def\cP{\mathcal{P}}
\def\cX{\mathcal{X}}
\def\al{{\alpha}}\def\be{{\beta}}\def\de{{\delta}}
\def\ep{{\epsilon}}\def\ga{{\gamma}}
\def\la{{\lambda}}\def\om{{\omega}}\def\si{{\sigma}}
\def\De{{\Delta}}
\def\Ga{{\Gamma}}
\def\La{{\Lambda}}\def\Om{{\Omega}}
\def\th{{\theta}}
\def\<{\left<}\def\>{\right>}
\def\({\left(}\def\){\right)}
\newcommand{\lan}{\langle}
\newcommand{\ran}{\rangle}
\def\goto{{\rightarrow}}
\def\blemma{\begin{lemma}}\def\elemma{\end{lemma}}
 \def\bproposition{\begin{prop}}\def\eproposition{\end{prop}}
 \def\btheorem{\begin{theorem}}\def\etheorem{\end{theorem}}
 \def\bcorollary{\begin{corollary}}\def\ecorollary{\end{corollary}}
\def\beqlb{\begin{eqnarray}}\def\eeqlb{\end{eqnarray}}
 \def\beqnn{\begin{eqnarray*}}\def\eeqnn{\end{eqnarray*}}
 \def\mcr{\mathscr}
\begin{document}

\title{\large \bf  The reversibility and an SPDE for the generalized
Fleming-Viot Processes with mutation\thanks{Research of ZL is
supported partially by NSFC (10525103 and 10721091) and CJSP, JX by
NSF DMS-0906907, HL and XZ by NSERC.}}
\author{{Zenghu Li$\,^{a}$, Huili Liu$\,^{b}$,
Jie Xiong$\,^{c,d}$ and Xiaowen Zhou$\,^{b,}\thanks{Corresponding
author. E-mail address: xiaowen.zhou@concordia.ca
Tel: (001)514-848-2424, ext. 3220   Fax: (001)514-848-2831.}$}\\
\,\\
\small{$\;^a\,$Laboratory of Mathematics and Complex Systems, School
of Mathematical Sciences,}\\\small{ Beijing Normal University,
Beijing, 100875, PRC}\\
\small{$\;^b\,$Department of Mathematics and Statistics, Concordia
University,}\\
\small{1455 de Maisonneuve  Blvd. West, Montreal, Quebec, H3G 1M8,
Canada}\\
 \small{$\;^c\,$Department of Mathematics,
University of Tennessee, Knoxville, TN 37996-1300, USA}\\
\small{$\;^d\,$Department of Mathematics, Hebei Normal University,
Shijiazhuang, 050024, PRC}}
\date{\today}
 \maketitle

\date{}
\maketitle

\begin{abstract}
The $(\Xi, A)$-Fleming-Viot process  with mutation is a probability-measure-valued
process whose moment dual is similar to that of the classical
Fleming-Viot process except that the Kingman's coalescent is
replaced by the $\Xi$-coalescent, the coalescent with simultaneous
multiple collisions. We first prove the existence of such a process
for general mutation generator $A$. We then investigate its
reversibility.
We also study both the
weak and strong uniqueness of solution to the associated stochastic partial
differential equation.
  \vspace*{.125in}

\noindent {\it Keywords:} Fleming-Viot process, reversibility,
$\Xi$-coalescent, stochastic partial differential equation, strong uniqueness.

\vspace*{.125in}

\noindent {\it AMS 2000 subject classifications:} Primary 60G57,
60H15; secondary 60J80.
\end{abstract}

\bigskip


\section{Introduction}

\setcounter{equation}{0}

The classical Fleming-Viot process is a probability-measure-valued
process for mathematical population genetics.  It describes the
evolution of relative frequencies for different types of alleles in
a large population undergoing resampling together with possible
mutation, selection and recombination; see Ethier and Kurtz
\cite{EtK93} and references therein for earlier work on the classical
Fleming-Viot process. When the classical Fleming-Viot process only involves
mutation and resampling, it is well-known that its moment dual  is a function-valued Markov process
governed by the Kingman's coalescent and the mutation semigroup.

During the past ten years, more general coalescents have
been proposed and studied by many authors. For examples, the
$\La$-coalescent (cf. Pitman \cite{P} and Sagitov \cite{Sa99}) is a
coalescent with possible multiple collisions and the
$\Xi$-coalescent (cf. M$\ddot{\text{o}}$hle and Sagitov
\cite{MoSa01} and Schweinsberg \cite{Sch00B}) is a coalescent with
possible simultaneous multiple collisions. It is then interesting to
know whether there exists a generalized Fleming-Viot type
probability-measure-valued process whose dual is a function-valued
process evolving in the same way as the classical Fleming-Viot dual
but with the Kingman's coalescent replaced by the $\Xi$-coalescent.

Such a generalized Fleming-Viot process was first considered by
Donnelly and Kurtz \cite{DoKu99} and Hiraba \cite{Hi00}.
 When the spatial motion of the particle is negated, namely, the
mutation is $0$, it has also been studied  by Bertoin and Le Gall
(\cite{BeL00}, \cite{BeL03}, \cite{BeL05}, \cite{BeL06}) and Birkner
et al \cite{BiBlCaEtMoScWa05}. In particular, a special form of such
process is constructed in \cite{BeL05} using the weak solution flow
of a stochastic equation driven by a Poisson random measure.
Generalized Fleming-Viot processes with parent independent jump
mutation operators are constructed by Dawson and Li \cite{DaL10} as
strong solutions of stochastic equations driven by time-space white
noises and Poisson random measures. The classical Fleming-Viot
process with Laplacian mutation operator is characterized by Xiong
\cite{X} as the strong solution of an SPDE driven by a time-space
white noise. The common feature of the approaches of \cite{BeL05},
\cite{DaL10} and \cite{X} is to consider the processes of
distributions of the measure-valued processes instead of their
density processes. In fact, the processes studied in \cite{BeL05}
and \cite{DaL10} are usually not absolutely continuous. Similar
stochastic equations for Dawson-Watanabe superprocesses have also
been studied in \cite{BeL05}, \cite{BeL06}, \cite{DaL10} and
\cite{X}.

 This problem is also studied in the recent work of Birkner et al
\cite{MJ}. When the mutation generator $A$  is the generator for a
pure jump Markov process, two constructions of the $(\Xi,
A)$-Fleming-Viot process are found in \cite{MJ}. One
construction is based on modification of the lookdown scheme of
\cite{DoKu99} applied to exchangeable particle
systems for the classical Fleming-Viot process. The $(\Xi,
A)$-Fleming-Viot process arises as the pathwise almost sure limit of
the empirical
 measure for
the exchangeable particle system. The other construction is based on the Hille-Yosida theorem. The
resulted process gives an example of probability-measure-valued superprocess of jump
diffusion type.

In this paper, we further study the existence and various properties
of this generalized Fleming-Viot process. We first formulate a well-posed martingale problem and show that
the $(\Xi, A)$-Fleming-Viot process $X$
with general mutation generator is the unique solution to such a martingale
problem. We then show that $X$ has a unique invariant measure if the
mutation process allows a unique invariant measure.

The reversibility of a population genetic model  is an important issue for
statistical inference. The reversibility for the classical
Fleming-Viot process has been investigated in Li et al \cite{LSY99}
using Dirichlet forms and in Handa \cite{Han02} and Schmuland and
Sun \cite{ScSu02} via cocycle identity. The reversibility for an
interacting classical Fleming-Viot process is studied in Feng et al
\cite{FSVZ11}. We also consider the reversibility for $(\Xi,
A)$-Fleming-Viot process in this paper.   By adapting the approach
of \cite{LSY99} we first show that for the $(\Xi, A)$-Fleming-Viot
process $X$ to be reversible, the mutation generator $A$ is
necessary a parent independent jump generator. Furthermore, if the type
space contains at least three points or it contains two points with
non identical mutation rates to them, to be reversible the
$\Xi$-coalescent for the Fleming-Viot process has to degenerate into
the Kingman's coalescent. When the type space contains exactly two
points with equal mutation rates to them, we show that the
above-mentioned result is still valid for several examples where
explicit computations can be carried out.

When the mutation generator $A$ is the one-dimensional Laplacian
operator, we further study the SPDEs  associated with the $(\Xi,
A)$-Fleming-Viot process. In order to establish the strong
uniqueness we associate the SPDE to a backward SDE and then prove
the pathwise uniqueness of the backward SDE using a Yamada-Watanabe
type argument. Such an approach was first proposed in \cite{X} to
prove the strong uniqueness of SPDE arising from super-Brownian
motion and Fleming-Viot process over the real line.

The rest of this article is organized as follows. In Section 2 we
first introduce the $(\Xi,A)$-coalescent, which serves as the dual
to the $(\Xi,A)$-Fleming-Viot process. In Section \ref{sec2}, we
give a new construction of the $(\Xi,A)$-Fleming-Viot with general mutation
generator $A$. In Section \ref{sec3}, we study the ergodicity and
the reversibility of this process. Finally, in Section \ref{sec5} we
study an SPDE associated to the $(\Xi,A)$-Fleming-Viot process with
$A$ being the Laplacian operator. We prove the strong uniqueness of
the solution to this nonlinear SPDE driven by a Brownian sheet and a
Poisson random measure.

\section{The $(\Xi,A)$-coalescent}

We first borrow some notation from Bertoin \cite{Ber}. Put
$[n]:=\{1,\ldots,n\}$ and $[\infty]:=\{1,2,\ldots\}$. A {\it
partition} of $D\subset [\infty]$ is a countable collection
$\pi=\{\pi_i, i=1,2,\ldots\}$ of disjoint {\it blocks} such that
$\cup_{i}\pi_i=D$ and $\min\pi_i<\min\pi_j$ for $i<j$. Let
$\mathcal{P}_n$ denote the set of partitions of $[n]$ and
$\mathcal{P}_\infty$ denote the set of partitions of $[\infty]$.
Write $\mathbf{0}_{[n]}:=\{\{1\},\ldots,\{n\}\}$ for the partition
of $[n]$ consisting of singletons.

Given a partition $\pi\in\mathcal{P}_n$ for some $n$ and
$\pi'\in\mathcal{P}_{k}$ with $|\pi|\leq k$ where $|\pi|$ denotes the cardinality
of $\pi$, the {\it coagulation} of $\pi$ by $\pi'$, denoted by
$\text{Coag}(\pi,\pi')$, is defined as the following partition of
$[n]$,
\[\pi''=\left\{\pi''_j:=\cup_{i\in \pi'_j}\pi_i: j=1,\ldots,|\pi'|\right\}.\]
For example, for $\pi=\{\{1,3\}, \{2\}, \{4,5,9\}, \{6, 8\},
\{7\}\}$ and $\pi'=\{\{1,5,6\}, \{2,3,4\}\}$, we have
\[\text{Coag}(\pi,\pi')=\left\{\{1,3,7\}, \{2,4,5,6,8,9\}\right\}.\]

Given a partition $\pi$ with $|\pi|=b$ and a sequence of positive
integers $s,k_1,\ldots,k_r$ such that $k_i\geq 2, i=1,\ldots,r$
and $b=s+\sum_{i=1}^r k_i $, we say a partition $\pi''$ is obtained
by a {\it $(b;k_1,\ldots,k_r,s)$-collision} of $\pi$ if
$\pi''=\text{Coag}(\pi,\pi')$ for some partition $\pi'$ such that
\[\{|\pi'_i|: i=1,\ldots,|\pi'|
\}=\{k_1,\ldots,k_r,k_{r+1},\ldots,k_{r+s}\},\] where
$k_{r+1}=\cdots=k_{r+s}=1$, i.e. $\pi''$ is a merger of the $b$
blocks of $\pi$ into $r+s$ blocks in which $s$ blocks remain
unchanged and the other $r$ blocks contain $k_1,\ldots,k_r$ blocks
from $\pi$.

The {\it $\Xi$-coalescent} is a $\cP_\infty$-valued coalescent
 $\Pi_\infty=(\Pi_\infty(t))_{t\geq 0}$ starting from partition
$\Pi_\infty{(0)}\in \cP_\infty$ such that
 for any $n\in[\infty]$, its
restriction to $[n]$, $\Pi_n=(\Pi_n(t))_{t\geq 0}$ is a Markov chain
and that given $\Pi_n(t)$ has $b$-blocks, each
$(b;k_1,\ldots,k_r;s)$-collision occurs at rate
$\la_{b;k_1,\ldots,k_r;s}$. For the $\Xi$-coalescent to be well
defined, it is sufficient and necessary that there is a nonzero
finite measure $\Xi=\Xi_0+\sigma^2\delta_\mathbf{0}$ on the infinite simplex
\[\Delta=\left\{\mathbf{x}=(x_1,x_2,\ldots): x_1\geq x_2\geq\cdots\geq 0, \sum_{i=1}^\infty x_i\leq 1\right\}\]
such that $\Xi_0$ has no atom at $\mathbf{0}$, $\delta_\mathbf{0}$ denotes a point
mass at $\mathbf{0}$, $\sigma\geq 0$ is a constant and
\begin{equation*}\label{coal-rate}
\begin{split}
\la_{b;k_1,\ldots,k_r;s}=\sigma^2
1_{\{r=1,k_1=2\}}+\be_{b;k_1,\ldots,k_r;s},
\end{split}
\end{equation*}
where
\begin{equation}\label{be}
\be_{b;k_1,\ldots,k_r;s}=\int_\Delta \sum_{l=0}^s\sum_{i_1\neq
\cdots\neq i_{r+l}}{s\choose{l}}x_{i_1}^{k_1}\ldots
x_{i_r}^{k_r}x_{i_{r+1}} \ldots x_{i_{r+l}}\left(1-\sum_{j=1}^\infty
x_j\right)^{s-l} \frac{\Xi_0(d\mathbf{x})}{\sum_{j=1}^\infty x_j^2}
\end{equation}
denotes the rate of simultaneous multiple coalescent with $\Xi_0$
being the measure of multiple coagulation and $\sigma^2$ denotes the
rate of binary coagulation.  As a result, the coagulation rates
satisfy the {\it consistency condition}
\[\la_{b;k_1,\ldots,k_r;s}=\sum_{m=1}^r \la_{b+1;k_1,\ldots,k_{m-1},k_m+1,k_{m+1},\ldots,k_r;s}
+s\la_{b+1;k_1,\ldots,k_r,2;s-1}+\la_{b+1;k_1,\ldots,k_r;s+1}.\]
See Schweinsberg \cite{Sch00B}.

When the measure $\Xi$ is supported on $[0,1]$, the corresponding
coalescent  involves at most multiple collisions. Such
$\Xi$-coalescents are also called $\Lambda$-coalescent.

A Poisson process construction of $\Xi$-coalescent is  given in \cite{Sch00B} as follows. For each $\mathbf{x}=(x_1,x_2,\ldots)\in \Delta $ let $P_\mathbf{x}$ be a probability measure on $\mathbb{Z}^\infty$ such that \[P_\mathbf{x}\{\mathbf{z}=(z_1,\ldots)\in\mathbb{Z}^\infty:z_i=j \}=x_j,\]
\[P_\mathbf{x}\{\mathbf{z}: z_i=-i\}=1-\sum_{j=1}^\infty x_j\] and for any $n\in [\infty]$ and $(k_1,\ldots,k_n)\in \mathbb{Z}^n$,
\[P_\mathbf{x}\{\mathbf{z}: z_i=k_i, i=1,\ldots,n\}=\prod_{i=1}^n P_\mathbf{x}\{\mathbf{z}: z_i=k_i \}.\]
Then define a measure $L$ on $\mathbb{Z}^\infty$ by
\[L(B):=\int_\Delta P_\mathbf{x}(B)\frac{\Xi_0(d\mathbf{x})}{\sum_{j=1}^\infty x_j^2}+\sigma^2 \sum_{i=1}^\infty\sum_{j=i+1}^\infty 1_{\mathbf{z}_{ij}\in B}\]
where $\mathbf{z}_{ij}=(z_1,\ldots)$ with $z_i=z_j=1$ and $z_k=-k$ for $k\not\in \{i,j\}$. Let $(e(t))_{t\geq 0}$ be a $\mathbb{Z}^\infty$-valued Poisson point process with characteristic measure $L$. Let $(e_n(t))_{t\geq 0}$ be the process $(e(t))_{t\geq 0}$ restricted to $\mathbb{Z}^n$.  Notice that $e$ and $e_n$ can be identified as $\mathcal{P}_\infty$-valued and $\mathcal{P}_n$-valued process, respectively, in the obvious way. Then the $\Xi$-coalescent $\Pi_n$ can be constructed using $e_n$ recursively. Given $\Pi_n(0)\in\mathcal{P}_n$ and suppose that $\Pi_n(s)$ has been constructed for $0\leq s\leq t$. Let $T>t$ be the first jumping time for $e_n$ after $t$. Then define $\Pi_n(s)=\Pi_n(t)$ for $t<s<T$ and $\Pi_n(T)=\text{Coag} (\Pi_n(t),e_n(T))$.
 See  \cite{Sch00B} and references therein  for more detailed inductions on the  $\Xi$-coalescent.

Let $E$ be a Polish space containing at least two points. Let $B(E)$
be the set of bounded functions on $E$. Let $M_1(E)$ be the space of
Borel probability measures on $E$. Then $M_1(E)$ endowed with the
topology of weak convergence is a Polish space. For $\mu\in M_1(E)$
and $f\in B(E)$, write $\<\mu,f\> =\mu(f):= \int f d\mu$. For $n\ge
1$ and $f\in B(E^n)$, let
 \beqlb\label{1.1}
G_{n,f}(\mu)
 =
G_\mu(n,f)
 =
\<\mu^n,f\>.
 \eeqlb
Let $Z$ be a Markov process with state space $E$, Feller transition
semigroup $(P_t)$ and generator $(A, \mathcal{D}(A))$. Process $Z$ describes the mutation mechanism for the Fleming-Viot process.

Given a function $g$ on $E^n$, for each partition $\pi=\{\pi_i,
i=1,\ldots,|\pi|\}$ on $[n]$, we define $\Phi_\pi g$ as a function
on $E^{|\pi|}$ such that $\Phi_\pi
g(x_1,\ldots,x_{|\pi|})=g(x_{i_1},\ldots,x_{i_n})$ with $i_j=k$ for
$i_j\in \pi_k, j\in [n]$. For example, if $g$ is a function on
$E^6$ and $\pi=\{\{1,4\},\{2,3,6\},\{5\}\}$, then \[\Phi_\pi
g(x_1,x_2,x_3)=g(x_1,x_2,x_2,x_1,x_3,x_2).\]  For $1\leq i<j\leq n$
we write $\Phi_{ij}$ for $\Phi_\pi$ with
\[\pi=\left\{\{1\},\ldots,\{i-1\},\{i,j\},\{i+1\},\ldots,\{j-1\},\{j+1\},\ldots,\{n\}\right\}.\]

The {\it $(\Xi, A)$-coalescent} $(M,Z)=(M_t,Z_t)_{t\geq 0}$ with
initial value $(M_0,Z_0)=(n,f), f\in B(E^n)$ is a $ \cup_{k=1}^n
\{k\}\times B(E^k)$-valued Markov process defined as follows. Let
$\Pi_n$ with $\Pi_n(0)=\mathbf{0}_{[n]}$ be the $\Xi$-coalescent
defined before. For any $f\in C(E^n)$, the set of bounded continuous
functions on $E^n$ equipped with the supremum norm, define semigroup
$(P_t^{(n)})$ by
\[P^{(n)}_tf(x_1,\ldots,x_n):=\int_{E^n}f(\xi_1,\ldots,\xi_n)\prod_{i=1}^n
P_t(x_i,d\xi_i).\]
 Then $M:=|\Pi_n|$ and $Z$ can be
defined iteratively as follows. Write $T_0=0$ and $T_1<T_2<\cdots$
for the sequence of ordered jumping times for $\Pi_n$. For any $T_i<t<T_{i+1}$,
define
\[Z_t:=P^{(M_{T_i})}_{t-T_i}Z_{T_i}\,\,\, \text{and}\,\,\,
Z_{T_{i+1}}=\Phi_{\pi'}Z_{T_{i+1}-}\,\,\, \text{if}\,\,\,
\Pi_n(T_{i+1})=\text{Coag} (\Pi_n(T_{i+1}-),\pi').\]

For any $\pi\in\cP_n, \pi\neq \mathbf{0}_{[n]}$, write
$\beta_{\pi}:=\beta_{n;k_1,\ldots,k_r;s}$ where
$(n;k_1,\ldots,k_r;s)$ is uniquely determined by
\[\{k_1,\ldots,k_r,1,\ldots,1\}=\{|\pi_i|: 1\leq i\leq |\pi|\}.\]

For fixed $\mu\in M_1(E)$ and the function $G_\mu$ given by
(\ref{1.1}) we define for any $f\in\mathcal{D}\(A^{(n)}\)$,
 \begin{equation*}
 \begin{split}
L^*G_\mu(n,f)
 =&
G_\mu(n,A^{(n)}f) + \sigma^2\sum_{i<j}\Big[G_\mu(n-1,\Phi_{ij}f) -
G_\mu(n,f)\Big] \\
 &
+\sum_{\pi\in\cP_n\backslash\{\mathbf{0}_{[n]}\}} \beta_{\pi}
\Big[G_\mu\(|\pi|,\Phi_{\pi}f\) - G_\mu(n,f)\Big],
 \end{split}
 \end{equation*}
 where $A^{(n)}$ denotes the generator of $\(P^{(n)}_t\)$.
Clearly, $L^*$  is the generator for the Markov process
$(M_t,Z_t)$.

\section{Existence of the $(\Xi, A)$-Fleming-Viot process}\label{sec2}
We call an $M_1(E)$-valued Markov process $X$ a $(\Xi,
A)$-Fleming-Viot process with
$\Xi$-resampling mechanism and mutation generator $A$ if for any
$n\in\mathbb{N}, f\in B(E^n)$, its moment is determined by
\begin{equation}\label{moment-dual}
\EE G_{n,f}(X_t)=\EE G_{X_0}(M_t,Z_t)
\end{equation}
 where $(M,Z)$
denotes the $(\Xi,A)$-coalescent with initial value $(n,f)$, which
is defined in the previous section. Since process $X$ is
probability-measure-valued, its distribution is uniquely determined
by (\ref{moment-dual}).

The $(\Xi, A)$-Fleming-Viot process is constructed in Birkner et al
\cite{MJ} for generator $A$  of jump type i.e.
$$Af(x)=r\int_E(f(y)-f(x))q(x,dy),$$ where $f$ is a bounded function on $E$, $q(x,dy)$ is a Feller transition
function and $r>0$ is the global mutation rate.

 In this section, we want to show that the desired probability-measure-valued process is well defined
 for any Feller generator $A$ with transition semigroup $(P_t)$ on $C(E)$. 
To this end, we want to show that the $(\Xi, A)$-Fleming-Viot
process is the unique solution to a martingale problem. Let
$D_{M_1}[0,\infty)$ be the space of c$\grave{\text
a}$dl$\grave{\text a}$g paths from $[0,\infty)$ to $M_1(E)$
furnished with the Skorohod topology.

Let $\mcr{D}_1\subset C(M_1(E))$ be the linear span of the functions
of form (\ref{1.1}) with $n\ge 1$ and $f\in \mathcal{D}(A^{(n)})$.
Let $L_0$ be the linear operator from $\mcr{D}_1$
to $C(M_1(E))$ defined by
 \beqlb\label{1.2}
L_0G_{n,f}(\mu) = \<\mu^n,A^{(n)}f\> + \sigma^2\sum_{1\leq i<j\leq n}
\Big[\<\mu^{n-1},\Phi_{ij}f\> - \<\mu^n,f\>\Big]
 \eeqlb
for $\mu\in M_1(E)$.

Let $C_{M_1}[0,\infty)$ be the space of continuous paths from
$[0,\infty)$ to $M_1(E)$ furnished with the topology of
uniform convergence. The next theorem follows from Theorem~3.2 of
Ethier and Kurtz \cite{EtK93}.

\btheorem\label{t2.1} The $(L_0,\mcr{D}_1)$-martingale problem in
$C_{M_1}[0,\infty)$ is well-posed. \etheorem

Note that the solution to the $(L_0,\mcr{D}_1)$-martingale problem
is the well-known classical Fleming-Viot process. In the generator, only the
Kingman's coalescent is represented by the term $\Phi_{ij}$. To
extend the process, we introduce more general coalescent.

Fix $n$ and $f\in B(E^n)$. For $\mu\in M_1(E)$ define
 \beqlb   \label{eq0506b}
\mathbf{B}G_{n,f}(\mu)=\sum_{\pi\in\cP_n\backslash\{\mathbf{0}_{[n]}\}
}\beta_\pi\Big[\<\mu^{|\pi|},\Phi_\pi f\> - \<\mu^n,f\>\Big].
 \eeqlb

For $f\in \mathcal{D}(A^{(n)})$ let $L$ be the linear operator from
$\mcr{D}_1$ to $C(M_1(E))$ defined by
 \beqlb \label{eq0506c}
LG_{n,f}(\mu) = L_0G_{n,f}(\mu) + \mathbf{B}G_{n,f}(\mu).
 \eeqlb

Define test functions $F(\mu)=\prod_{i=1}^{n}\left<\mu, f_i\right>$
for any $n$ and any $f_i\in B(E), i=1,\ldots,n$. The generator
related to the simultaneous multiple part of the $\Xi$- coalescent
is defined as

\[\mathbf{B} F(\mu) =  \sum_{\pi\in\cP_n\backslash\{0_{[n]}\}}\be_{\pi}
\left(\prod_{i=1}^{|\pi|}\<\mu,\prod_{j\in\pi_i}
f_j\>-\prod_{i=1}^n\<\mu, f_i\>\right). \]

The following Lemma shows a different representation for this
generator, which has  been obtained in Birkner et al \cite{MJ}
(Equation (4.2)) for  jump type  generator $A$. Its proof is
essentially the same as \cite{MJ} and we omit it.

\begin{lemma}\label{gen-jump}
The generator $\mathbf{B}$ can be represented as
$$\mathbf{B}F(\mu)=\int_{\Delta}\(\sum_{i=1}^{\infty}z_i^2\)^{-1}\Xi_0\(dz\)
\int_{E^\mathbb{N}}\left(F\(\sum_{j=1}^{\infty}z_j\delta_{x_j}+\(1-\sum_{j=1}^{\infty}
z_j\)\mu\)-F\(\mu\)\right){\mu}^{\otimes\mathbb{N}}\(dx\),$$ where
$x_i, i=1,2,3,\ldots$ are independently and identically distributed
random variables with common distribution $\mu$. $\Delta$ is the
infinite simplex satisfying $\Delta=\{z=\(z_1,z_2,\ldots\):z_1\geq
z_2\geq\cdots\geq 0,\sum_{i=1}^{\infty}z_i\leq 1\}$.\end{lemma}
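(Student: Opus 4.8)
The plan is to expand both sides of the claimed identity as sums indexed by the partitions of $[n]$ and to match them term by term. Writing $\nu = \sum_{j=1}^\infty z_j\delta_{x_j} + (1-\sum_{j=1}^\infty z_j)\mu$ for the random measure on the right-hand side, I would first compute, for each factor, $\langle\nu,f_i\rangle = \sum_{j\geq 1} z_j f_i(x_j) + (1-\sum_{j\geq 1} z_j)\langle\mu,f_i\rangle$. Multiplying over $i=1,\ldots,n$ and expanding $F(\nu)=\prod_{i=1}^n\langle\nu,f_i\rangle$, each of the $n$ factors selects either one of the atoms (a positive index $j$) or the ``dust'' term $(1-\sum_j z_j)\langle\mu,f_i\rangle$. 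I would encode such a selection as a colouring $c:[n]\to\{0,1,2,\ldots\}$, with $c(i)=0$ for the dust term and $c(i)=j\geq 1$ for the atom $x_j$; for fixed $z$ with $\sum_j z_j\leq 1$ and fixed $x$, this expansion is an absolutely convergent sum over colourings.

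Next I would integrate the expansion against $\mu^{\otimes\mathbb{N}}(dx)$. Since the $x_j$ are i.i.d.\ with law $\mu$, the integration factorises across distinct atoms: indices sharing a common positive colour $j$ are integrated jointly, yielding $\langle\mu,\prod_{i\in c^{-1}(j)}f_i\rangle$ together with a weight $z_j^{|c^{-1}(j)|}$, while each dust index and each index that is the unique holder of its colour contributes a separate factor $\langle\mu,f_i\rangle$. Thus every colouring $c$ induces a partition $\pi(c)\in\cP_n$ whose non-singleton blocks are exactly the colour classes of size at least two, and the integrated contribution of $c$ factors as $w_c(z)\,M_{\pi(c)}(\mu)$, where $M_\pi(\mu):=\prod_{i=1}^{|\pi|}\langle\mu,\prod_{j\in\pi_i}f_j\rangle$ is precisely the measure term appearing in the definition of $\mathbf{B}F$, and $w_c(z)$ is the product of the corresponding $z$-weights, with a factor $(1-\sum_j z_j)$ for each dust index.

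I would then group the colourings by their induced partition and set $w_\pi(z):=\sum_{c:\,\pi(c)=\pi}w_c(z)$. For a fixed $\pi$ of type $(n;k_1,\ldots,k_r;s)$, summing over all $c$ with $\pi(c)=\pi$ amounts to choosing distinct atoms for the $r$ merged blocks and, for each of the $s$ singletons, deciding whether it is dust or carries a further atom distinct from all others. Writing $l$ for the number of singletons that carry an atom, this gives $w_\pi(z)=\sum_{l=0}^s\binom{s}{l}\sum_{i_1\neq\cdots\neq i_{r+l}} z_{i_1}^{k_1}\cdots z_{i_r}^{k_r}z_{i_{r+1}}\cdots z_{i_{r+l}}(1-\sum_j z_j)^{s-l}$, which is exactly the integrand of $\beta_{n;k_1,\ldots,k_r;s}$ in (\ref{be}). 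Integrating against $\Xi_0(dz)/\sum_j z_j^2$ therefore yields $\int_\Delta w_\pi(z)\,\Xi_0(dz)/\sum_j z_j^2=\beta_\pi$, and hence $\int_\Delta(\sum_j z_j^2)^{-1}\Xi_0(dz)\int_{E^{\mathbb{N}}}F(\nu)\,\mu^{\otimes\mathbb{N}}(dx)=\sum_{\pi\in\cP_n}\beta_\pi M_\pi(\mu)$.

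Finally, to account for the $-F(\mu)$ subtracted inside the right-hand side, I would use the normalisation $\sum_{\pi\in\cP_n}w_\pi(z)=\prod_{i=1}^n\big(\sum_j z_j+(1-\sum_j z_j)\big)=1$, valid for every $z$. Subtracting $F(\mu)=M_{\mathbf{0}_{[n]}}(\mu)$ then makes the trivial-partition contribution vanish identically and leaves exactly $\sum_{\pi\in\cP_n\setminus\{\mathbf{0}_{[n]}\}}\beta_\pi\big[M_\pi(\mu)-F(\mu)\big]=\mathbf{B}F(\mu)$. I expect the main obstacle to be the bookkeeping in the third step: keeping the distinctness constraints among the chosen atoms straight and correctly separating the ``participating'' singletons (each weighted by $z_i$) from the dust singletons (each weighted by $1-\sum_j z_j$), so as to reproduce the factors $\binom{s}{l}$ and $\sum_{i_1\neq\cdots\neq i_{r+l}}$ of (\ref{be}). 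The accompanying technical point is the justification of Fubini: here the cancellation of the last step is essential, since subtracting $F(\mu)$ forces each surviving weight $w_\pi(z)$ for a non-trivial $\pi$ (which has a block of size at least two, hence a factor $z_{i_1}^{k_1}$ with $k_1\geq2$) to be of order $\sum_j z_j^2$, taming the $1/\sum_j z_j^2$ singularity near the origin and making the integral absolutely convergent; as $\cP_n$ is finite, no further care with the partition sum is needed.
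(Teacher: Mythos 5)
Your proof is correct. Note, however, that the paper itself offers no argument for this lemma: it simply states that the proof is ``essentially the same as'' the one in Birkner et al.\ \cite{MJ} (their Equation (4.2)) for jump-type mutation generators, and omits it. Your proposal therefore supplies the missing details, and it does so along exactly the lines that the cited reference follows: expand $F(\nu)=\prod_{i=1}^n\langle\nu,f_i\rangle$ over the atom/dust decomposition of $\nu$, integrate out the i.i.d.\ atoms so that each colouring collapses to a partition-indexed moment term, and regroup the $z$-weights by partition so that they reproduce precisely the integrand of the rate formula (\ref{be}), including the $\binom{s}{l}$ factor and the distinctness constraints among chosen atom indices. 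You also correctly isolate the two points where care is needed and resolve both: the bookkeeping that identifies $w_\pi(z)$ with the integrand of $\beta_\pi$ (the $r$ ordered distinct atoms for merged blocks versus the $l$ participating singletons versus the $s-l$ dust singletons), and the integrability near $\mathbf{0}$, where the normalisation $\sum_{\pi}w_\pi(z)=1$ cancels the trivial-partition term, leaving only weights of order $\sum_j z_j^2$ so that the $\bigl(\sum_j z_j^2\bigr)^{-1}\Xi_0(dz)$ integral converges absolutely and Fubini applies. There is no gap; this is a complete and correct substitute for the omitted proof.
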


Lemma \ref{gen-jump} shows that the $(\Xi,A)$-Fleming-Viot process is a jump
diffusion type superprocess such that between jumping times it
evolves like a classical Fleming-Viot process, and at a jumping time a
fraction of its mass is redistributed over the current support of
the process.

We now consider the martingale problem (MP) related to the SPDE to
be considered later in Section 5. Let $\cX$ be the closure of $D(A)\subset
C_b(E)$ with respect to the norm
\[\|f\|\equiv\|f\|_\infty+\|Af\|_\infty,\]
where $\|f\|_\infty$ is the supremum norm of $f$. Denote the dual
space of the Banach space $\cX$ by $\cX^*$. Introduce the distance
in $\cX^*$ by
\[\rho(X,Y)=\sum_{j=1}^\infty 2^{-j}\(|X(f_j)-Y(f_j)|\wedge 1\),\]
where $\{f_j\}\subset D(A)$ is such that $span(\{f_j\})$ is dense in
$\cX$.

A stochastic process $X\in D(M_1(E))$ is a solution to the following
generalized Fleming-Viot MP (GFVMP in short) if for any $f\in
\mathcal{D}(A)$,
\begin{equation}\label{eq0913a}
M^f_t=\<X_t,f\>-\<\mu,f\>-\int^t_0\<X_{s},Af\>ds
\end{equation}
is a square-integrable martingale such that
\begin{equation}\label{eq0913a2}\<M^{f,c}\>_t=\si^2\int^t_0\(\<X_{s},f^2\>-\<X_{s},f\>^2\)ds,\end{equation}
and $\forall\ B\in\Ga$, the process
\begin{equation}\label{eq0913a3}
\sum_{0<s\le t}1_B(\De
M_s)-\int^t_0\int_\Delta\int_{E^\mathbb{N}}1_B\(\sum^\infty_{i=1}z_i(\de_{x_i}-X_s)\)\ga(ds
dz dx)\end{equation} is a martingale, where $M^{f,c}$ is the
continuous part of the martingale $M^f$ and $M_t$ is the
$\cX^*$-valued martingale such that $M^f_t=M_t(f)$, $\De
M_s=M_s-M_{s-}$,
\[\ga(ds dz dx)=ds \otimes \(\sum_{i=1}^\infty
z_i^2\)^{-1}\Xi_0(dz)\otimes X_{s}^\mathbb{N}(dx)\] is a random
measure on $\RR_+\times\Delta\times E^{\mathbb{N}}$, and
\[\Ga=\left\{B\in\cB(\cX^*\setminus\{0\}):\;\forall\
t>0,\;\EE\int^t_0\int_\Delta\int_{E^\mathbb{N}}1_B\(\sum^\infty_{i=1}z_i(\de_{x_i}-X_s)\)\ga(ds
dz dx)<\infty\right\}.\]

\begin{theorem}\label{SPDE-weak}
The GFVMP has a solution in $D_{M_1}[0,\infty)$. Further, every
solution $X$ to the GFVMP is a $(\Xi,A)$-Fleming-Viot process.
Consequently, the GFVMP is well-posed.
\end{theorem}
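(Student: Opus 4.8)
The plan is to establish the three assertions of Theorem~\ref{SPDE-weak} in order: existence of a GFVMP solution, the identification of every solution as a $(\Xi,A)$-Fleming-Viot process, and well-posedness as a consequence. For \emph{existence}, I would start from the $(\Xi,A)$-Fleming-Viot process $X$ whose distribution is already pinned down by the moment duality \eqref{moment-dual}, together with the operator $L=L_0+\mathbf{B}$ of \eqref{eq0506c}. The key observation is that $L$ is exactly the generator appearing in the martingale problem: applying $L$ to the test functions $G_{n,f}$ produces, via Lemma~\ref{gen-jump}, a decomposition of the dynamics into a continuous (Fleming--Viot) part governed by $L_0$ and a jump part governed by $\mathbf{B}$. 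So I would first argue that $X$ solves the $(L,\mcr{D}_1)$-martingale problem, and then translate that martingale problem into the three structural conditions \eqref{eq0913a}--\eqref{eq0913a3}. Concretely, taking $f\in\mathcal{D}(A)$ and $n=1$ in the $L_0$-part gives the linear martingale $M^f_t$ of \eqref{eq0913a}; choosing $n=2$ and $f\otimes f$ isolates the bracket of the continuous part and yields the $\sigma^2$-quadratic-variation formula \eqref{eq0913a2}; and the representation of $\mathbf{B}$ from Lemma~\ref{gen-jump} identifies the jumps of $X$ as $\sum_i z_i(\delta_{x_i}-X_{s-})$ with the prescribed compensator $\gamma(ds\,dz\,dx)$, giving the jump-martingale condition \eqref{eq0913a3}.

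For the second and principal assertion, I would take an arbitrary solution $X\in D_{M_1}[0,\infty)$ of the GFVMP and show it satisfies the moment duality \eqref{moment-dual}, which by the uniqueness of the dual characterization forces $X$ to be a $(\Xi,A)$-Fleming-Viot process. The natural route is to reverse the construction above: from the linear martingale property \eqref{eq0913a}, the continuous bracket \eqref{eq0913a2}, and the jump-compensator condition \eqref{eq0913a3}, I would reconstruct the action of $L$ on the polynomial test functions $G_{n,f}$. The continuous conditions recover $L_0 G_{n,f}$ by an application of It\^o's formula for the continuous martingale part together with polarization to handle products $\langle X_s,f_1\rangle\cdots\langle X_s,f_n\rangle$; the jump condition, integrated against the test function through the compensator $\gamma$, recovers the integral representation of $\mathbf{B}G_{n,f}$ furnished by Lemma~\ref{gen-jump}. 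Having shown that $G_{n,f}(X_t)-\int_0^t LG_{n,f}(X_s)\,ds$ is a martingale for all $n$ and all $f\in\mathcal{D}(A^{(n)})$, I would then run the standard duality argument: comparing the forward evolution of $\EE G_{n,f}(X_t)$ under $L$ with the backward evolution of the $(\Xi,A)$-coalescent $(M,Z)$ under its generator $L^*$, and using that $L$ and $L^*$ are in duality on the polynomials $G_\mu(n,f)$, one obtains \eqref{moment-dual}.

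The main obstacle I expect is making the jump part rigorous at the level of polynomial test functions: one must verify that the formal action of $\mathbf{B}$ on $G_{n,f}$, obtained by expanding $F\bigl(\sum_j z_j\delta_{x_j}+(1-\sum_j z_j)\mu\bigr)$ in Lemma~\ref{gen-jump}, genuinely coincides with the compensated jump contribution coming from \eqref{eq0913a3}, including integrability of the infinite sums over the simplex $\Delta$ against $(\sum_i z_i^2)^{-1}\Xi_0(dz)$. This requires controlling the moments $\langle X_s,|f|^n\rangle$ uniformly (using $\|f\|_\infty$ bounds and $X_s\in M_1(E)$) and justifying the interchange of the $E^{\mathbb{N}}$-integration with the coalescent combinatorics that turn products of $\langle\mu,\prod_{j\in\pi_i}f_j\rangle$ into the $\beta_\pi$-weighted sum \eqref{eq0506b}. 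Once the generator identity $G_{n,f}(X_t)-\int_0^t LG_{n,f}(X_s)\,ds\in\text{martingale}$ is secured for all solutions, both existence and the solution-identification feed into \emph{well-posedness}: existence is immediate from the first part, and uniqueness follows because every solution has the law prescribed by \eqref{moment-dual}, which by the discussion following \eqref{moment-dual} uniquely determines the distribution of the probability-measure-valued process $X$.
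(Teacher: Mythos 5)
Your second assertion (that every GFVMP solution is a $(\Xi,A)$-Fleming-Viot process) is handled essentially as in the paper: It\^o's formula applied to polynomial test functions, Lemma~\ref{gen-jump} to identify the compensated jump contribution with $\mathbf{B}$, and the martingale duality argument to obtain \eqref{moment-dual}; that part of your plan is sound. The genuine gap is in your existence argument, which is circular. You propose to ``start from the $(\Xi,A)$-Fleming-Viot process $X$ whose distribution is already pinned down by the moment duality \eqref{moment-dual}'' and then verify that it solves the GFVMP. But at this point no such process is known to exist for a general Feller generator $A$: the duality relation \eqref{moment-dual} determines the law of $X$ uniquely \emph{if} such a process exists, it does not produce one, and the construction in \cite{MJ} covers only mutation generators of pure jump type. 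Establishing existence for general $A$ is precisely the purpose of Theorem~\ref{SPDE-weak} and of this section, so the process cannot be taken as given. The paper instead constructs a GFVMP solution directly: when $(\sum_i z_i^2)^{-1}\Xi_0(dz)$ is finite, one runs a classical Fleming-Viot process (Theorem~\ref{t2.1}) between the jump times $\tau_1<\tau_2<\cdots$ of an independent Poisson process and at each $\tau_j$ resamples $X_{\tau_j}=\sum_i z_i\delta_{x_i}+\left(1-\sum_i z_i\right)X_{\tau_j-}$ with $(z_i)$ drawn from $(\sum_i z_i^2)^{-1}\Xi_0$ and $(x_i)$ i.i.d.\ from $X_{\tau_j-}$; for general $\Xi_0$ one truncates to $\nu_n=1_{\{\sum_i z_i^2>1/n\}}\Xi_0$, solves for each $n$, proves tightness of the solutions and of their martingale parts, and passes to the limit using the results of \cite{JS} and \cite{KX}. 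Existence of the $(\Xi,A)$-Fleming-Viot process for general $A$ is then a \emph{consequence} of this construction combined with the identification step, not an input to it.

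A secondary weakness: even if you were granted the dual-defined process, your passage from the $(L,\mcr{D}_1)$-martingale problem to the structural conditions \eqref{eq0913a2} and \eqref{eq0913a3} is not just a matter of choosing $n=2$ and $f\otimes f$. Condition \eqref{eq0913a3} concerns the actual jumps of the $\cX^*$-valued martingale $M$ and asserts that the compensator of its jump measure is $\gamma$; extracting such semimartingale characteristics (the splitting into $M^{f,c}$ and a purely discontinuous part, and the identification of the compensator) from moment-type martingale identities requires a genuine argument that your sketch does not supply. The paper's construction sidesteps this difficulty because the jumps and their compensator are explicit by design.
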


\begin{proof}
If $X$ is a solution to the GFVMP, then
\[\sum_{0<s\le t}\lan\De M_s,f\ran-\int^t_0\int_\Delta\int_{E^\mathbb{N}}\sum^\infty_{i=1}z_i(f(x_i)-\lan X_s,f\ran)\ga(ds
dz dx)\] is a pure jump martingale. Then (\ref{moment-dual}) follows
from  It\^o's formula (Theorem 4.57 of \cite{JS}),  Lemma
\ref{gen-jump} and the martingale duality argument. The solution $X$
is thus a $(\Xi,A)$-Fleming-Viot process.

Now we proceed to prove the existence of a solution for this
martingale problem. First, we suppose that $(\sum_{i=1}^\infty
z_i^2)^{-1}\Xi_0(dz)$ is a finite measure. Without loss of
generality, we may assume that the total mass is 1. Let
$0<\tau_1<\tau_2<\cdots$ be the jump times of a standard Poisson
process. Let $X_t$ be an $M_1(E)$-valued process defined as follows.
Between the Poisson times, it evolves like the usual Fleming-Viot process. At
the jump time $\tau_j$, we independently choose a $\Delta$-valued
random variable $(z_i)$ with distribution $(\sum_{i=1}^\infty
z_i^2)^{-1}\Xi_0$, and an $E^\mathbb{N}$-valued random variable
$(x_i)$ with distribution $X_{\tau_j-}^{\otimes\mathbb{N}}$,
$j=1,2,\ldots$ and then set
\[X_{\tau_j}:=\sum_{i=1}^\infty z_i\delta_{x_i}+\left(1-\sum_{i=1}^\infty z_i\right)X_{\tau_j-}.\]
It is then easy to verify that $X_t$ is a solution
to the martingale problem (\ref{eq0913a}).

In general, we approximate $\Xi_0$ by
$\nu_n(dz)=1_{\sum_{i=1}^\infty z_i^2>1/n}\Xi_0(dz)$. Let $X^n$ be
the solution to the GFVMP with $\Xi_0$ replaced by $\nu_n$. Then,
\[M^n_t=X^n_t-\mu-\int^t_0 A^*X^n_sds\]
is a sequence of $\cX^*$-valued martingales. Denote its continuous
and purely-discontinuous parts by $M^{n,c}$ and $M^{n,d}$,
respectively.    Then
\[\<M^{n,c}(f)\>_t=\si^2\int^t_0\(\<X^n_s,f^2\>-\<X^n_s,f\>^2\)ds\]
and for any $B\in\Ga$, the process
\[\sum_{0<s\le t}1_B(\De
M^n_s)-\int^t_0\int_\Delta\int_{E^\mathbb{N}}1_B\(\sum^\infty_{i=1}z_i(\de_{x_i}-X^n_s)\)\ga^n(ds
dz dx)\] is a martingale, where $A^*$ is the adjoint operator of
$A$. Note that $X^n$ is a sequence of $\cX^*$-valued processes. To
prove the tightness of $X^n$, we only need to prove the tightness of
$\<X^n,f\>$ for any $f\in \cX$. Note that the finite variation part
is
\[A^n_t=\int^t_0\<X^n_s,Af\>ds\]
and the martingale part $M^n_t(f)$ has quadratic variation processes
\begin{eqnarray*}
\<M^n(f)\>_t&=&\(\si^2+\Xi_0(\Delta)\)\int^t_0\(\<X^n_s,f^2\>-\<X^n_s,f\>^2\)ds\\
&&+\int^t_0\int_\Delta\int_{E^\mathbb{N}}\(\sum^\infty_{i=1}z_i(f(x_i)-X^n_s(f))\)^2\ga^n(ds
dz dx) .\end{eqnarray*} It is easy to prove that $\{A^n\}$ and
$\{\<M^n(f)\>\}$ are $C$-tight. By Corollary 3.33 (p317) and Theorem
4.13 (p322) of Jacod and Shiryaev \cite{JS} (see also Theorem 6.1.1
of Kallianpur and Xiong \cite{KX}) we see that $\<X^n,f\>$ is tight.
So, $X^n$ is tight. Similarly, we can prove the tightness of
$\{(M^{n,c},M^{n,d})\}$.

Denote a limit of $\{(X^n,M^{n,c},M^{n,d})\}$ by
$(X,\tilde{M}^1,\tilde{M}^2)$. It is easy to prove that
$\tilde{M}^1(f)$ is a continuous martingale,
$M_t=\tilde{M}^1_t+\tilde{M}^2_t$ and
\[\<\tilde{M}^1(f)\>_t=\si^2\int^t_0\(\<X_s,f^2\>-\<X_s,f\>^2\)ds.\]
By the same arguments as those in the proofs of Theorem 6.1.3 and
Lemma 6.1.11 of \cite{KX}, we can prove that $\{\tilde{M}^2(f)\}$ is
purely-discontinuous, and for any $B\in\Ga$, the process
\[\sum_{0<s\le t}1_B(\De\tilde{M}^2_s)-\int^t_0\int_{\Delta}\int_{E^{\mathbb{N}}}
1_B\(\sum_{i=1}^\infty z_i\de_{x_i}-\sum_{i=1}^\infty z_iX_s\)\ga(ds
dz dx)\] is a martingale. Thus, $(X_t)$ is a solution to the
martingale problem (\ref{eq0913a}).
 \qed
\end{proof}

\section{The reversibility}\label{sec3}

In this section, we consider the reversibility of the $(\Xi,A)$-Fleming-Viot
process whose existence is justified in the previous section. We
will prove that it  is irreversible except for the case of
classical Fleming-Viot process with parent independent mutation.

To obtain the existence and uniqueness of the invariant measure for
the $(\Xi,A)$-Fleming-Viot process we need the following assumption.

\noindent {\bf Assumption (I)}: The Markov process $Z$ with
generator $A$ has a unique invariant measure
 $\nu\in M_1(E)$ such that $P_t^*\mu\to\nu$  weakly for any $\mu\in M_1(E)$ as $t\rightarrow\infty$, where $(P^*_t)$
denotes the adjoint for $(P_t)$.

\begin{lemma}
Suppose that the Assumption (I) holds. Then,

 (a) The $(\Xi, A)$-Fleming-Viot process
$X$ has at least one invariant measure.

(b) Let $\Pi$ be any invariant measure of the $(\Xi, A)$-Fleming-Viot process
$X$. For any positive integer $n$ and any $f\in B(E^n)$, we have
\[\int_{M_1(E)}\<\mu^n,f\>\Pi(d\mu)=\EE_{(n,f)}\<\nu, Z_\tau\>,\]
where \[\tau:=\inf\{t\geq 0: |M_t|=1\}\] and $(M,Z)$ is the
$(\Xi, A)$-coalescent  starting at $(n,f)$. Consequently, $X$
has a unique invariant measure.
\end{lemma}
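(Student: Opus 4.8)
The plan is to make the moment duality (\ref{moment-dual}) the sole engine of the proof, reading off the long-time behaviour of the dual coalescent $(M,Z)$ and transferring it to $X$. The first thing I would record is a stabilization fact about the dual. Started from $(n,f)$ with $n$ finite, the block count $M=|\Pi_n|$ is a finite-state continuous-time Markov chain that strictly decreases at a positive total rate whenever $M_t\ge 2$: since $\Xi$ is a nonzero measure, either $\sigma^2>0$ or $\Xi_0\neq 0$, and in either case there is a positive merger rate out of every configuration with at least two blocks. Hence the single-block partition is the unique absorbing state and is reached from every configuration, so $\tau<\infty$ almost surely; moreover for $t\ge\tau$ one has $M_t=1$ and $Z_t=P_{t-\tau}Z_\tau$, a single particle evolving under the mutation semigroup. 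Combined with the invariance $P_s^*\nu=\nu$ this yields the key identity $\langle\nu,Z_t\rangle=\langle P_{t-\tau}^*\nu,Z_\tau\rangle=\langle\nu,Z_\tau\rangle$ for all $t\ge\tau$. I would also note the uniform bound $\|Z_t\|_\infty\le\|f\|_\infty$, since $Z$ is built from $f$ by the sup-norm contractions $P_t^{(k)}$ and the relabelings $\Phi_\pi$.

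For part (b), let $\Pi$ be any invariant measure and set $I_{n,f}:=\int_{M_1(E)}\langle\mu^n,f\rangle\,\Pi(d\mu)$. Integrating (\ref{moment-dual}) against $X_0\sim\Pi$ and using stationarity on the left (so that $X_t\sim\Pi$) gives, after Fubini,
\[ I_{n,f}=\EE_{(n,f)}\big[\,I_{M_t,Z_t}\,\big],\qquad t\ge 0. \]
Taking first $n=1$ (where $M_t\equiv 1$, $\tau=0$, $Z_t=P_tf$) gives $I_{1,f}=\langle P_t^*\bar\Pi,f\rangle$ for all $t$, with $\bar\Pi=\int\mu\,\Pi(d\mu)$; letting $t\to\infty$ and invoking Assumption~(I) forces the mean measure $\bar\Pi=\nu$. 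For general $n$ I would split the identity at the absorption time: on $\{t\ge\tau\}$ the stabilization step gives $I_{M_t,Z_t}=\langle\bar\Pi,Z_t\rangle=\langle\nu,Z_t\rangle=\langle\nu,Z_\tau\rangle$, while on $\{t<\tau\}$ the integrand is bounded by $\|f\|_\infty$. Since $\tau<\infty$ a.s., dominated convergence as $t\to\infty$ collapses the identity to $I_{n,f}=\EE_{(n,f)}\langle\nu,Z_\tau\rangle$, which is exactly the asserted formula. Uniqueness is then immediate: the functionals $G_{n,f}$ separate points of $M_1(E)$ and are measure-determining there (this is precisely the property already used to define $X$ through (\ref{moment-dual})), so two invariant measures with identical moments $\EE_{(n,f)}\langle\nu,Z_\tau\rangle$ must coincide.

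For the existence claim (a) I would run a Krylov--Bogolyubov argument along $t\to\infty$ from a fixed $X_0=\mu_0$. The $n=1$ duality shows the mean measure of the law of $X_t$ is $P_t^*\mu_0$, which converges weakly to $\nu$ and is therefore relatively compact, hence a tight family on $E$. A standard reduction (Markov's inequality applied to $\mu\mapsto\mu(E\setminus C)$ over a countable family of compacta $C$) upgrades tightness of the mean measures on $E$ to tightness of $\{\mathrm{Law}(X_t)\}_t$ on $M_1(E)$. Any weak subsequential limit $\Pi$ satisfies $\int G_{n,f}\,d\Pi=\lim_t\EE_{\mu_0}\langle X_t^n,f\rangle$, and the computation of (b) — now with a deterministic $\mu_0$ — shows this limit equals $\EE_{(n,f)}\langle\nu,Z_\tau\rangle$, independent of $\mu_0$. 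Thus all subsequential limits share the determining moments and agree, so $\mathrm{Law}(X_t)\Rightarrow\Pi$; the Feller property of $X$ (continuity of $\mu_0\mapsto\EE_{\mu_0}G_{n,f}(X_s)$, again read from (\ref{moment-dual})) then gives invariance of $\Pi$ by passing to the limit in $\mathrm{Law}(X_{t+s})$.

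Routine bookkeeping aside, I expect two genuine obstacles. The first is the soft-analysis layer: passing from tightness of mean measures on $E$ to tightness of the laws on $M_1(E)$, and the assertion that the moment functionals $G_{n,f}$ are measure-determining on $M_1(E)$ for a merely Polish (non-compact) $E$; these are standard but must be invoked with care, since Stone--Weierstrass is unavailable without compactness. The second is verifying $\tau<\infty$, i.e. that the restricted $\Xi$-coalescent comes down to a single block — elementary here because $n$ is finite, but the delicate point were one attempting an analogue in the coming-down-from-infinity regime. Everything else reduces to interchanging the limit $t\to\infty$ with the coalescent expectation, justified by $\tau<\infty$ a.s. together with the uniform bound $\|Z_t\|_\infty\le\|f\|_\infty$.
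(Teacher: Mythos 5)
Your proposal is correct, and while it runs on the same engine as the paper (moment duality plus tightness of the laws extracted from the mean measures $P_t^*X_0$), the execution differs genuinely in both parts. For (b), the paper integrates the duality against $\Pi$, applies the strong Markov property of the dual at $\tau$, and then passes $P_t^*\mu\to\nu$ to the limit \emph{inside} the integral over $\Pi(d\mu)$ by dominated convergence; you instead first extract from the $n=1$ case that the barycenter $\bar\Pi$ of any invariant measure equals $\nu$, and then use the pathwise identity $\<\nu,P_s g\>=\<\nu,g\>$ to make the integrand constantly equal to $\<\nu,Z_\tau\>$ on $\{t\ge\tau\}$. This is a real gain: the paper's last step tests the weak convergence $P_t^*\mu\to\nu$ against $Z_\tau$, which tacitly requires $Z_\tau$ to be continuous (i.e.\ $f$ continuous), whereas your route works verbatim for all bounded measurable $f$, as the lemma is stated. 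For (a), the paper runs the classical Krylov--Bogolyubov argument on Ces\`aro averages $T^{-1}\int_0^T PX_t^{-1}dt$, where any limit point is automatically invariant and no identification of the limit is needed; you instead prove the stronger statement that $\mathrm{Law}(X_t)$ itself converges, identifying every subsequential limit through the (b)-type computation with deterministic $\mu_0$ (here you do need $f$, hence $Z_\tau$, continuous, which suffices since continuous-$f$ moments are measure-determining on $M_1(E)$ for Polish $E$), and then deduce invariance from the Feller-type continuity of the moment functionals. Your version buys ergodic convergence to equilibrium as a by-product, at the cost of leaning harder on the measure-determining property of moments and the Feller continuity --- both of which the paper also uses, but only implicitly (its invariance step $\Pi X_s^{-1}=\Pi$ likewise needs the Feller property to pass to the limit). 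Finally, you supply a justification that $\tau<\infty$ a.s.\ (positive pairwise merger rate $\sigma^2+\Xi_0(\Delta)>0$ from any configuration with at least two blocks), which the paper merely asserts.
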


\begin{proof} (a) As $P_t^*X_0\to\nu$, the family $\{P_t^*X_0:\ t\ge
0\}$ is pre-compact, and hence, tight in $M_1(E)$. Thus, for any
$\ep>0$, there exists a compact subset $K_\ep$ of $E$ such that
$P_t^*X_0(K^c_\ep)<\ep$ for all $t\ge 0$. Let
\[\KK_\ep=\left\{\rho\in M_1(E):\;\rho(K^c_{\ep k^{-1}2^{-k}})\le
k^{-1},\;\;\forall\ k\ge 1\right\}.\] For any $\de>0$, let $k\ge 1$
be such that $k^{-1}<\de$. Then, for any $\rho\in\KK_\ep$, we have
\[\rho(K^c_{\ep k^{-1}2^{-k}})\le
k^{-1}<\de,\] and hence, $\KK_\ep$ is tight in $M_1(E)$. Then,
$\KK_\ep$ is a pre-compact subset of $M_1(E)$.

Note that
\begin{eqnarray*}
T^{-1}\int^T_0PX^{-1}_tdt(\KK^c_\ep)&=&T^{-1}\int^T_0P\(\exists\
k\ge
1,\;\;X_t(K^c_{\ep k^{-1}2^{-k}})>k^{-1}\right)dt\\
&\le&T^{-1}\int^T_0\sum^\infty_{k=1}k\EE X_t(K^c_{\ep k^{-1}2^{-k}})dt\\
&=&T^{-1}\int^T_0\sum^\infty_{k=1}k P_t^*X_0(K^c_{\ep k^{-1}2^{-k}})dt\\
&<&T^{-1}\int^T_0\sum^\infty_{k=1}k \ep k^{-1}2^{-k}dt=\ep.
\end{eqnarray*}
Thus, the family  $\left\{T^{-1}\int^T_0PX^{-1}_tdt:\;T\ge
0\right\}$ is tight, and hence, pre-compact in $M_1(M_1(E))$. Let
$\Pi$ be a limit point. Then there exists a sequence $(T_n)$ such
that $T_n\uparrow \infty$ and
\[\lim_{n\goto\infty }T_n^{-1}\int^{T_n}_0PX^{-1}_t dt=\Pi.\]
For any $s\ge 0$,
\begin{eqnarray*}
\Pi X^{-1}_s&=&\lim_{n\to\infty}T_n^{-1}\int^{T_n}_0PX^{-1}_t\circ
X^{-1}_s
dt\\
&=&\lim_{n\to\infty}T_n^{-1}\int^{T_n+s}_sPX^{-1}_{t}dt\\
&=&\Pi.
\end{eqnarray*}
Namely, $\Pi$ is an invariant measure of the stochastic process
$\{X_t\}$.

(b) Note that $\tau<\infty$. By the moment duality
(\ref{moment-dual}) and the strong Markov property we have
\begin{equation*}
\begin{split}
\int_{M_1(E)} \<\mu^n,f\>\Pi(d\mu)&=\lim_{t\goto\infty}\EE_\Pi\< X_t^n, f\>\\
&=\lim_{t\goto\infty}\int_{M_1(E)}\EE_{(n,f)}[G_\mu(M_t,Z_t)]\Pi(d\mu)\\
&=\lim_{t\goto\infty}\int_{M_1(E)}\EE_{(n,f)}[G_\mu(M_t,Z_t)1_{\tau\leq t}]\Pi(d\mu)\\
&=\lim_{t\goto\infty}\int_{M_1(E)}\EE_{(n,f)}\EE_{(1,Z_{\tau})}[\<\mu,Z_{t-\tau}\>1_{\tau\leq t}]\Pi(d\mu)\\
&=\lim_{t\goto\infty}\int_{M_1(E)}\EE_{(n,f)}\EE_{(1,Z_{\tau})}\<\mu,
Z_t\>\Pi(d\mu)\\
&=\lim_{t\goto\infty}\int_{M_1(E)}\EE_{(n,f)}\<\mu,
P_tZ_{\tau}\>\Pi(d\mu)\\
&=\lim_{t\goto\infty}\int_{M_1(E)}\EE_{(n,f)}\<P^*_t\mu,
Z_{\tau}\>\Pi(d\mu)\\
&=\EE_{(n,f)}\<\nu, Z_{\tau}\>,
\end{split}
\end{equation*}
where we have used the fact that $\tau<\infty$ a.s.. \qed
\end{proof}

We now consider the reversibility of the $(\Xi, A)$-Fleming-Viot process $X$.

The transition semigroup $(P_t)$ is irreducible if for any $x\in
E$ and $f\in C_0^+(E)$ with $f\neq 0$, $P_tf(x)>0$ for some
$t>0$, where $C_0^+(E)$ denotes the space of nonnegative and continuous functions vanishing at infinity.

\begin{lemma}\label{rev1}
Suppose that $(P_t)$ is irreducible.  If the $(\Xi, A)$-Fleming-Viot process
$X$ is reversible, then its mutation generator $A$ is a parent
independent pure jump generator, i.e.
\[Af(\cdot)=\frac{\theta}{2}\int_E(f(y)-f(\cdot))\nu_0(dy)\] for some
$\theta>0$ and probability measure $\nu_0$ on $E$.
\end{lemma}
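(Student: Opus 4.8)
The plan is to reformulate reversibility of $X$ as the symmetry of its generator $L$ in $L^2(\Pi)$, where $\Pi$ is the unique invariant measure furnished by the previous lemma, and then to test this symmetry against the core $\mcr{D}_1=\mathrm{span}\{G_{n,f}\}$. Reversibility is equivalent to
\[\int_{M_1(E)}G\,LF\,d\Pi=\int_{M_1(E)}F\,LG\,d\Pi\qquad\text{for all }F,G\in\mcr{D}_1.\]
Since $\mcr{D}_1$ is generated by the monomials $G_{n,f}(\mu)=\<\mu^n,f\>$, everything reduces to identities among the moment functionals of $\Pi$, and the previous lemma supplies these explicitly: writing $C(g,h):=\int\<\mu,g\>\<\mu,h\>\,\Pi(d\mu)$ and $C_3(g,h_1,h_2):=\int\<\mu,g\>\<\mu,h_1\>\<\mu,h_2\>\,\Pi(d\mu)$, each of $C$, $C_3$ is a coalescent time-average of the mutation semigroup $(P_t)$ against $\nu$. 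I would record that $C$ and $C_3$ are symmetric and positive, and that $C$ is nondegenerate once $(P_t)$ is irreducible, since the reversible measure then charges a rich family of configurations.

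First I would feed in the linear test functions $F=G_{1,g}$, $G=G_{1,h}$ with $g,h\in\cD(A)$. Here all resampling terms drop out ($\cP_1=\{\mathbf{0}_{[1]}\}$ and there are no pairs), so $LG_{1,g}=G_{1,Ag}$, and reversibility collapses to
\[C(Ag,h)=C(g,Ah)\qquad\text{for all }g,h\in\cD(A),\]
that is, $A$ is symmetric with respect to the second-moment form $C$; this forces the mutation to be reversible in the $C$-weighted sense and links $A$ to its $\nu$-adjoint through $C$. Next I would take $F=G_{1,g}$ and $G=G_{2,h_1\otimes h_2}$; a short computation with the $n=2$ generator gives
\[LG_{2,h_1\otimes h_2}(\mu)=\<\mu,Ah_1\>\<\mu,h_2\>+\<\mu,h_1\>\<\mu,Ah_2\>+\lambda\big[\<\mu,h_1h_2\>-\<\mu,h_1\>\<\mu,h_2\>\big],\]
with $\lambda:=\sigma^2+\be_{2;2;0}$ the total rate at which two lineages merge. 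Reversibility then yields
\[C_3(Ag,h_1,h_2)=C_3(g,Ah_1,h_2)+C_3(g,h_1,Ah_2)+\lambda\big[C(g,h_1h_2)-C_3(g,h_1,h_2)\big].\]
The crucial feature is the \emph{merging} term $\lambda\,C(g,h_1h_2)$: it is produced by the resampling mechanism fusing two lineages carrying $h_1$ and $h_2$ into one carrying $h_1h_2$, and it injects the coalescent structure into the moment hierarchy.

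The conclusion then comes from exploiting the whole family of such identities, over all $n$, to isolate the action of $A$ on a single lineage. The merging terms constrain $A(h_1h_2)$ in terms of $Ah_1$, $Ah_2$ and products and, combined with the $C$-symmetry of the first step, should force $A$ to be a bounded operator whose range is one-dimensional modulo constants: there is a constant $\theta>0$ with $Af+\tfrac{\theta}{2}f$ a constant function for every $f\in\cD(A)$. Writing that constant as $\tfrac{\theta}{2}\<\nu_0,f\>$ identifies $A$ with the parent-independent pure-jump generator $Af(\cdot)=\tfrac{\theta}{2}\int_E(f(y)-f(\cdot))\,\nu_0(dy)$, as claimed. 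Irreducibility of $(P_t)$ is indispensable here: it guarantees that the kernels $C$ and $C_3$ are strictly positive on a sufficiently large set of test functions, so that the integrated identities localize to pointwise equalities, and it excludes the degenerate low-dimensional type spaces in which a non-jump reversible $A$ might otherwise persist.

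\textbf{The main obstacle} I anticipate is precisely this last localization. The stationary identities are integrated against $\Pi$, equivalently they are Laplace transforms of the mutation semigroup evaluated only at the fixed merge rate $\lambda$, so individually they pin down $A$ only weakly. Upgrading them to the pointwise statement $Af+\tfrac{\theta}{2}f\equiv\mathrm{const}$ requires both the full hierarchy of test functions and the strict positivity coming from irreducibility, together with care in handling the possibly unbounded generator $A$ by working on the core $\cD(A)$ and using the Feller property of $(P_t)$.
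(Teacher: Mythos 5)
Your first three steps reproduce, in generator form, exactly the moment identities the paper extracts: testing symmetry of $L$ in $L^2(\Pi)$ against $G_{1,g}$ and against $G_{2,h_1\otimes h_2}$ gives precisely the paper's analogue of Lemma 2.5 of \cite{LSY99},
\[
m_3(Ag\otimes h_1\otimes h_2)=m_3(g\otimes Ah_1\otimes h_2)+m_3(g\otimes h_1\otimes Ah_2)
+\lambda\big[m_2(g\otimes (h_1h_2))-m_3(g\otimes h_1\otimes h_2)\big],
\]
with $\lambda=\sigma^2+\beta_{2;2;0}$ the pair-merger rate (the paper normalizes $\lambda=1$ by time rescaling, and it derives the same identity by differentiating the duality/stationarity relation at $t=0$ rather than by generator symmetry; that difference is cosmetic). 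So up to this point you are on the paper's track.

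The genuine gap is your step 4. The entire mathematical content of the lemma is the passage from these \emph{integrated} moment identities to the \emph{structural} conclusion that $A$ is a bounded, parent-independent pure jump generator, and you replace this passage by the assertion that the identities ``should force $A$ to be a bounded operator whose range is one-dimensional modulo constants.'' That assertion is exactly what has to be proved, and nothing in your proposal proves it; your own ``main obstacle'' paragraph concedes as much. The paper closes this gap by importing the machinery of Li--Shiga--Yao \cite{LSY99}: one first shows (their Lemmas 2.1 and 2.4, which need semigroup-level relations such as $m_2(f\otimes P_rg)-m_2(P_tf\otimes P_{t+r}g)=\int_0^t m(P_sfP_{s+r}g)\,ds-\int_0^t m_2(P_sf\otimes P_{s+r}g)\,ds$, not just the generator identity) that $A$ is symmetric with respect to the barycenter $m$ of $\Pi$, so that $-\int fAg\,dm$ is a Dirichlet form; then the Beurling--Deny decomposition of this form into local (diffusion), jump, and killing parts is combined with the third-moment identity to kill the local part and force the jump kernel to be parent independent. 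Irreducibility enters to guarantee the support properties needed there, not through a vague ``strict positivity of $C$ and $C_3$ localizes integrated identities to pointwise ones'' — that localization mechanism, as you state it, is not an argument and would not survive scrutiny (the identities involve $\Pi$-moments of products of $\langle\mu,\cdot\rangle$, and no amount of positivity alone converts them into the pointwise statement $Af+\tfrac{\theta}{2}f\equiv\mathrm{const}$). Without the Dirichlet form/Beurling--Deny step, or a worked substitute for it, the proof is incomplete at its decisive point.
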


\begin{proof}
 The result of the current Lemma has been proved for the classical Fleming-Viot process by Li et al \cite{LSY99}, Handa \cite{Han02} and Schmuland and Sun \cite{ScSu02} with different methods. It is simple to see that the barycenter $m$ of a stationary distribution of the classical Fleming-Viot process is a stationary distribution for its mutation operator $A$. A key observation in \cite{LSY99} is that the reversibility of $A$ relative to $m$ is structured by the first three moment measures of the classical Fleming-Viot process. The necessary form of $A$ was then derived in \cite{LSY99} from Beurling-Deny formula of the associated Dirichlet form.

We shall see that the structures explained above are maintained by the generalized Fleming-Viot processes. Our proof is an adaption of the approach for Theorem 1.1 of \cite{LSY99}. The results of Lemmas 2.1, 2.4, 2.5 and 2.6 in \cite{LSY99} still hold for the generalized Fleming-Viot process. We only need to make some modifications in the first half of the proof of their Lemma 2.1 and the second half of the proof of their Lemma 2.5 as follows. By possible time rescaling, without loss of generality we assume $\sigma^2+\beta_{2;2;0}=1$ in the sequel of this proof.

For the proof of the analog of Lemma 2.1 of \cite{LSY99}, let
$\varPi$ be the reversible measure of $X$ and define the moment
measures
\[m:=\EE_\varPi X_t, \, m_2:=\EE_\varPi
X_t\otimes X_t \text{\,\,\, and \,\,\,} m_3:=\EE_\varPi
X_t\otimes X_t\otimes X_t,\]
where we adopt the notation of \cite{LSY99}.
Given any partition $0=t_0<t_1<\cdots<t_n=t$ and $f,g,h\in \mathcal{D}(A)$, by
the invariance of $\varPi$ and the moment duality we have
\begin{equation*}
\begin{split}
&m_2(P_{t_i}f\otimes P_{t_i+r}g)\\
&=\EE_\varPi\left[ X_{t_{i+1}-t_i}(P_{t_i}f)X_{t_{i+1}-t_i}(P_{t_i+r}g) \right]\\
&=e^{-(\sigma^2+\beta_{2;2;0})(t_{i+1}-t_i)}\EE_\varPi\left[ X_0(P_{t_{i+1}}f)X_0(P_{t_{i+1}+r}g)\right]\\
&\quad+\int_0^{t_{i+1}-t_i} ds(\sigma^2+\beta_{2;2;0})e^{-(\sigma^2+\beta_{2;2;0})s}\EE_\varPi X_0(P_{t_{i+1}-t_i-s}(P_{t_i+s}fP_{t_i+s+r}g))\\
&=e^{-(t_{i+1}-t_i)}m_2(P_{t_{i+1}}f\otimes
P_{t_{i+1}+r}g)+\int_0^{t_{i+1}-t_i} ds
e^{-s}m(P_{t_{i+1}-t_i-s}(P_{t_i+s}fP_{t_i+s+r}g)).
\end{split}
\end{equation*}
Letting $n\goto\infty$ and $t_{i+1}-t_i=t/n$, we have
\begin{equation*}
\begin{split}
& m_2(f\otimes P_rg)-m_2(P_tf\otimes P_{t+r}g)\\
&=\lim_{n\goto\infty} \sum_{i=0}^{n-1}\left(m_2(P_{t_i}f\otimes P_{t_i+r}g)-m_2(P_{t_{i+1}}f\otimes P_{t_{i+1}+r}g)\right)\\
&=\int_0^t m(P_sfP_{s+r}g)ds-\int_0^t m_2(P_sf\otimes P_{s+r}g)ds.
\end{split}
\end{equation*}
We have thus obtained (2.4) of \cite{LSY99}.

For the proof of the analog of Lemma 2.5 of \cite{LSY99} by the
reversibility and the moment duality again,
 \begin{equation}\label{rev}
 \begin{split}
 &\EE_\varPi[X_0^{\otimes 2}(f\otimes g)X_0(P_th-h)]\\
 &=\int \Pi(d\mu)\EE_\mu [(X_t^{\otimes 2}(f\otimes g)-X_0^{\otimes 2}(f\otimes g))X_0(h)] \\
 &=\int \Pi(d\mu)\EE_\mu\left[X_0(h)\left(e^{-(\sigma^2+\beta_{2;2;0})t}X_0^{\otimes 2}(P_tf\otimes P_tg)\right.\right.\\
 &\qquad\qquad\quad\left.\left.+ \int_0^t ds (\sigma^2+\beta_{2;2;0})e^{-(\sigma^2+\beta_{2;2;0})s}X_0(P_{t-s}(P_sfP_sg))ds-X_0^{\otimes 2}(f\otimes g)  \right)\right].
 \end{split}
 \end{equation}
Dividing both sides of (\ref{rev}) by $t$ and letting $t\rightarrow
0+$, we have
\begin{equation*}
\begin{split}
&\EE_\varPi[X_0^{\otimes 3}(f\otimes g\otimes Ah)]\\
&=\EE_\varPi[X_0^{\otimes 3}(Af\otimes g\otimes h+f\otimes Ag\otimes h-
(\sigma^2+\beta_{2;2;0})f\otimes g\otimes h)+X_0^{\otimes
2}((\sigma^2+\beta_{2;2;0})(fg)\otimes h)]\\
&=\EE_\varPi[X_0^{\otimes 3}(Af\otimes g\otimes h+f\otimes Ag\otimes h-
f\otimes g\otimes h)+X_0^{\otimes 2}((fg)\otimes h)].
\end{split}
\end{equation*}
So, \[m_3(f\otimes g\otimes Ah)=m_3(Af\otimes g\otimes h+f\otimes Ag\otimes h-
f\otimes g\otimes h)-m_2((fg)\otimes h).\]
The rest of the proof then follows from that in Lemma 2.5 of
\cite{LSY99}.

The same argument for Theorem 1.1 of  \cite{LSY99} can also go
through for the generalized Fleming-Viot process.\qed
\end{proof}

\begin{lemma}\label{lem0928a}
If $\be_{p;p;0}=0$  for some $p\geq 3$, then the $\Xi$-coalescent
 degenerates to Kingman's coalescent.
\end{lemma}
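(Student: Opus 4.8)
The plan is to evaluate the collision rate $\be_{p;p;0}$ directly from the integral formula (\ref{be}) and then read off from the hypothesis $\be_{p;p;0}=0$ that the multiple-collision measure $\Xi_0$ must vanish. The index $(p;p;0)$ corresponds to the total merger of all $p$ blocks into a single block, so in the notation of (\ref{be}) I take $b=p$, $r=1$, $k_1=p$ and $s=0$. With $s=0$ the outer sum over $l$ collapses to the single term $l=0$, the multi-index sum $\sum_{i_1\neq\cdots\neq i_{r+l}}$ reduces to $\sum_{i_1}$, both the binomial coefficient ${s\choose l}$ and the factor $(1-\sum_j x_j)^{s-l}$ equal $1$, and the ``extra'' factors $x_{i_{r+1}}\cdots x_{i_{r+l}}$ form an empty product. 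Hence the formula simplifies to
\[
\be_{p;p;0}=\int_\Delta\frac{\sum_{i=1}^\infty x_i^p}{\sum_{j=1}^\infty x_j^2}\,\Xi_0(d\mathbf{x}).
\]

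Next I would record two elementary facts about the integrand $h(\mathbf{x}):=\big(\sum_i x_i^p\big)/\big(\sum_j x_j^2\big)$. First, since $p\ge 3$ and $0\le x_i\le 1$ one has $\sum_i x_i^p\le\sum_i x_i^2$, so $0\le h\le 1$ on $\Delta\setminus\{\mathbf{0}\}$; this already guarantees the integral is finite (bounded by $\Xi_0(\Delta)<\infty$) and well defined. Second, and crucially, $h(\mathbf{x})>0$ for every $\mathbf{x}\neq\mathbf{0}$, because $\mathbf{x}\ne\mathbf{0}$ forces some $x_i>0$, whence $\sum_i x_i^p>0$ while $\sum_j x_j^2>0$.

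With these in hand the conclusion is immediate. The hypothesis $\be_{p;p;0}=\int_\Delta h\,d\Xi_0=0$ together with $h\ge 0$ forces $h=0$ for $\Xi_0$-almost every $\mathbf{x}$, and since $h>0$ off $\mathbf{0}$ this means $\Xi_0(\Delta\setminus\{\mathbf{0}\})=0$. Because $\Xi_0$ is assumed to have no atom at $\mathbf{0}$, also $\Xi_0(\{\mathbf{0}\})=0$, so $\Xi_0$ is the zero measure. Then $\Xi=\sigma^2\delta_{\mathbf{0}}$ with $\sigma^2>0$ (as $\Xi$ is nonzero), and by the rate formula $\la_{b;k_1,\ldots,k_r;s}=\sigma^2 1_{\{r=1,k_1=2\}}+\be_{b;k_1,\ldots,k_r;s}$ every multiple-collision rate vanishes while each pair still coalesces at rate $\sigma^2$; that is precisely Kingman's coalescent.

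I do not expect a genuine obstacle here: the entire content lies in correctly specializing the combinatorial formula (\ref{be}) at $(p;p;0)$---keeping track of which sums are empty and which factors reduce to $1$---and then invoking the standard measure-theoretic fact that a nonnegative integrand with zero integral vanishes almost everywhere. The one point that deserves emphasis is the role of the assumption that $\Xi_0$ has no atom at $\mathbf{0}$: without it one could only conclude that $\Xi_0$ is concentrated at $\mathbf{0}$, which would not by itself force the coalescent to degenerate to Kingman's.
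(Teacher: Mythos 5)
Your proof is correct and follows essentially the same route as the paper's: specialize formula (\ref{be}) at $(p;p;0)$ to get $\be_{p;p;0}=\int_\Delta\bigl(\sum_i x_i^p\bigr)\bigl(\sum_j x_j^2\bigr)^{-1}\Xi_0(d\mathbf{x})$, observe that the integrand is strictly positive off $\mathbf{0}$, and conclude from the no-atom-at-$\mathbf{0}$ assumption that $\Xi_0$ is the zero measure, so only the binary (Kingman) part $\sigma^2$ survives. Your write-up is simply a more detailed version of the paper's argument, spelling out the combinatorial specialization and the measure-theoretic step that the paper states in one line.
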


\begin{proof}
For any $p\geq 3$, by (\ref{be}) we have
\[\be_{p;p;0}
 =\int_\Delta\(\sum_{i=1}^{\infty}x_{i}^p\)\(\sum_{i=1}^{\infty}x_i^2\)^{-1}\Xi_0\(d\mathbf{x}\).\]
If $\be_{p;p;0}=0$, then $\Xi_0$ must be a zero measure
since $\Xi_0$ has no atom at zero, and the integrand is always
positive except at zero. Therefore, the $\Xi$-coalescent  degenerates to Kingman's
coalescent.
\qed
\end{proof}

\begin{theorem}\label{1117}
Suppose that $(P_t)$ is irreducible and space $E$ contains at least
three different points or $E=\left\{e_1,e_2\right\}$ with
$\nu_0\(\{e_1\}\)\neq\nu_0\(\{e_2\}\)$. Then the $(\Xi,A)$-Fleming-Viot
process $X$ is reversible if and only if it is the classical Fleming-Viot
process with parent independent mutation.
\end{theorem}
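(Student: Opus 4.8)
The plan is to prove both implications, with the substance in the ``only if'' direction. For ``if'', the classical Fleming-Viot process with parent independent mutation is reversible; this is (part of) Theorem~1.1 of \cite{LSY99} (see also \cite{Han02,ScSu02}), and since the Kingman case is exactly $\Xi_0=0$ no new argument is needed. For ``only if'', suppose $X$ is reversible with reversible law $\varPi$. By Lemma~\ref{rev1} the irreducibility of $(P_t)$ forces $A$ to be the parent independent generator $Af=\frac{\theta}{2}(\langle\nu_0,f\rangle-f)$, so it remains only to show that the $\Xi$-coalescent degenerates to Kingman's coalescent. By Lemma~\ref{lem0928a} it suffices to produce a single $p\ge 3$ with $\beta_{p;p;0}=0$, and I would aim for $p=3$, that is, show $\beta_{3;3;0}=0$.

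To isolate $\beta_{3;3;0}$ I would extend the moment-measure computation of \cite{LSY99} (already used in the proof of Lemma~\ref{rev1}) by one order. Writing $m_k=\EE_\varPi X_t^{\otimes k}$ for the symmetric moment measures, reversibility is equivalent to the self-adjointness $\EE_\varPi[\,G\cdot LF\,]=\EE_\varPi[\,F\cdot LG\,]$ of the generator $L$ of (\ref{eq0506c}) on the monomials $G_{n,f}$. Taking $F(\mu)=\langle\mu,f\rangle\langle\mu,g\rangle\langle\mu,h\rangle$ and $G(\mu)=\langle\mu,k\rangle$ and expanding $LF$ by the level-$3$ collision rates produces a fourth-order identity in which the triple merger contributes $\beta_{3;3;0}\big[m_2((fgh)\otimes k)-m_4(f\otimes g\otimes h\otimes k)\big]$, while each binary merger contributes with the level-$3$ rate $\sigma^2+\beta_{3;2;1}$. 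Here I would invoke the consistency condition of Section~2, which gives $\sigma^2+\beta_{3;2;1}=(\sigma^2+\beta_{2;2;0})-\beta_{3;3;0}$, tying the binary and triple rates together. Rewriting every mutation term $m_k(\cdots\otimes A\,\cdot)$ through $\nu_0$ via parent independence, and subtracting the version of the identity with the roles of $h$ and $k$ interchanged, the fully symmetric measure $m_4$ drops out and one is left with a relation among $m_2$ and $m_3$ whose $\beta_{3;3;0}$-dependent part is explicit.

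Next I would eliminate the remaining moment measures. The second-order invariance relation $\EE_\varPi[L(\langle\mu,f\rangle\langle\mu,g\rangle)]=0$ yields $(\theta+c)\,m_2(f\otimes g)=\theta\langle\nu_0,f\rangle\langle\nu_0,g\rangle+c\,\langle\nu_0,fg\rangle$ with $c:=\sigma^2+\beta_{2;2;0}$, determining $m_2$ from $\nu_0$; the third-order invariance relation then expresses $m_3$ through $m_2$, $\nu_0$ and $\beta_{3;3;0}$. Substituting these into the relation of the previous paragraph collapses everything to a constraint on $\beta_{3;3;0}$ whose coefficients are explicit expressions in $\nu_0$ and the constants $\theta,c$. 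The conclusion $\beta_{3;3;0}=0$ then follows once this constraint is shown to have no nonzero solution, after which Lemma~\ref{lem0928a} identifies the coalescent as Kingman's and hence $X$ as the classical Fleming-Viot process with parent independent mutation.

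The main obstacle is this final non-degeneracy step, and it is precisely where the hypothesis on $E$ enters. When $E$ carries at least three points I would feed in indicators $f,g,h,k$ of distinct points, so that the products $fgh$ and $fgk$ are genuinely different and the coefficient of $\beta_{3;3;0}$ is nonzero; when $E=\{e_1,e_2\}$ the same choices work as soon as $\nu_0(\{e_1\})\neq\nu_0(\{e_2\})$, the asymmetry of $\nu_0$ blocking the cancellation. In the excluded two-point symmetric case the reflection $e_1\leftrightarrow e_2$ makes all moment measures symmetric and kills the coefficient identically, so the argument gives no information; this is the structural reason that case must be treated separately by the explicit examples mentioned in the introduction. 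I expect the careful bookkeeping of the fourth-order identity, together with the verification that the coefficient of $\beta_{3;3;0}$ does not vanish under the stated hypothesis, to be the delicate parts, whereas the reductions supplied by Lemmas~\ref{rev1} and~\ref{lem0928a} are routine.
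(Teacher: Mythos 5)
Your skeleton coincides with the paper's: Lemma \ref{rev1} reduces everything to showing $\beta_{3;3;0}=0$, after which Lemma \ref{lem0928a} kills $\Xi_0$; and your antisymmetrized fourth-order identity is in fact not new information. Specializing it to $f=g=h=1_F$ and $k\equiv 1$, it becomes exactly the difference of the third-order invariance relation ($p=3$, $q=0$ in (\ref{eq0915c})) and the paper's key reversibility relation (\ref{eq0915b}) (obtained there with $p=1$, $q=2$, $f=1_F$, $g=1_{F\times F}$), so modulo the invariance relations the two identities are equivalent. The first gap is that the entire content of the theorem lies in the step you defer: the paper substitutes the invariance-derived moments (\ref{eq0920h})--(\ref{eq0915a}) into (\ref{eq0915b}) and, using the consistency relation $\beta_{3;2;1}=\beta_{2;2;0}-\beta_{3;3;0}$, shows that everything collapses to $\beta_{3;3;0}\,\theta^2\alpha(2\alpha-1)(\alpha-1)$ over a manifestly positive denominator, as in (\ref{121123}). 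This collapse is not automatic: $\beta_{3;3;0}$ also enters your relation implicitly through $m_3$ and through the level-three binary rate $\sigma^2+\beta_{3;2;1}$, and without carrying out the elimination one cannot conclude that a nonzero $\beta_{3;3;0}$ is inconsistent rather than merely constrained.

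The second, more serious, gap is that your proposed non-degeneracy mechanism fails. If $f,g,h,k$ are indicators of pairwise distinct points, then $fg$, $fh$, $gh$, $fgh$, $fgk$ all vanish identically; hence the coefficient $m_2((fgh)\otimes k)-m_2((fgk)\otimes h)$ of $\beta_{3;3;0}$ in your identity is identically zero, all the binary-coalescence terms drop out as well, and you are left with a relation involving only mutation terms, which says nothing about the coalescent. The correct choice is the opposite of "all distinct": repeat a single indicator, $f=g=h=1_F$, and the hypothesis on $E$ enters not through "$fgh\neq fgk$" but through the existence (guaranteed by irreducibility, which forces $\nu_0$ to have full support) of a set $F$ with $\nu_0(F)=\alpha\notin\{0,1/2,1\}$; the conclusion $\beta_{3;3;0}=0$ then follows from the nonvanishing of the arithmetic factor $\alpha(2\alpha-1)(\alpha-1)$. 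Your reflection-symmetry remark correctly anticipates the factor $2\alpha-1$ (why the symmetric two-point case escapes the argument), but the positive assertion in the allowed cases requires the explicit computation, and the distinct-points heuristic would steer that computation to a vacuous identity.
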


\begin{proof}
 If the generalized Fleming-Viot process $X$ is reversible, then there exists an invariant measure,
say $\Pi$, satisfying
\begin{equation}\label{eq0915c}
\int_{M_1(E)}G_{p,f}(\mu)LG_{q,g}(\mu)\Pi(d\mu)=\int_{M_1(E)}G_{q,g}(\mu)LG_{p,f}(\mu)\Pi(d\mu),\end{equation}
where $p$ and $q$ are arbitrary nonnegative integers.

By Lemma \ref{rev1} the generator $A$ is a parent independent jump
operator. Also note that $\nu_0$ has a support $E$ since $(P_t)$ is
irreducible. If $E$ contains at least three different points or
$E=\left\{e_1,e_2\right\}$ with
$\nu_0\(\{e_1\}\)\neq\nu_0\(\{e_2\}\)$, we can choose $F\subset E$
such that
$$\nu_0(F)=\al\neq 0, 1/2, 1.$$
 Taking $p=1$, $q=0$ and $f=1_F$ in (\ref{eq0915c}) we get
\begin{equation}\label{eq0920h}
\int_{M_1(E)}\mu(F)\Pi(d\mu)=\al.
\end{equation}
Taking $p=2$, $q=0$ and $f=1_{F\times F}$, we get
\begin{equation}\label{eq0920a}
\int_{M_1(E)}\mu^2(F)\Pi(d\mu)=\frac{\si^2+\be_{2;2;0}+\th\al}{\si^2+\be_{2;2;0}+\th}\al.
\end{equation}
Taking $p=3$, $q=0$ and $f=1_{F\times F\times F}$, we get
\begin{equation}\label{eq0915a}
\int_{M_1(E)}\mu^3(F)\Pi(d\mu)=\frac{\be_{3;3;0}\al+\frac32\(2\si^2+2\be_{3;2;1}+\th\al\)
\frac{\si^2+\be_{2;2;0}+\th\al}{\si^2+\be_{2;2;0}+\th}\al}{3\si^2+3\be_{3;2;1}+\be_{3;3;0}+\frac32\th}.\end{equation}
Taking $p=1$, $q=2$, $f=1_F$ and $g=1_{F\times F}$, we get
\begin{equation}\label{eq0915b}
\int_{M_1(E)}\mu^2(F)\Pi(d\mu)\times\(\be_{2;2;0}+\si^2+\frac{\th\al}{2}\)-\int_{M_1(E)}\mu^3(F)\Pi(d\mu)\times\(\be_{2;2;0}+\si^2+\frac{\th}{2}\)=0.\end{equation}
Plugging in Equations (\ref{eq0920a}) and (\ref{eq0915a}). Since $\be_{3;2;1}=\be_{2;2;0}-\be_{3;3;0}$ (consistency
condition), then Equation  (\ref{eq0915b}) is equivalent to
\begin{eqnarray}\label{121123}
\begin{split}
&\frac{\be_{3;3;0}\theta^2\al(2\al-1)(\al-1)}{(\si^2+\be_{2;2;0}+\th)(6\si^2+6\be_{2;2;0}-4\be_{3;3;0}+3\th)}
=&0.
\end{split}
\end{eqnarray}
Note that \[6\si^2+6\be_{2;2;0}-4\be_{3;3;0}+3\th=6\si^2+6\be_{3;2;1}+2\be_{3;3;0}+3\th>0,\]
$\theta>0$ and $\alpha\neq 0,1/2,1$. Consequently, we have $\be_{3;3;0}=0$. By Lemma \ref{lem0928a} we have
$\Xi_{0}=0$. Hence, $X_t$ becomes the usual Fleming-Viot process with parent
independent mutation, which is known to be reversible. \qed
\end{proof}

In the following we  consider the reversibility for several classes of $(\Xi,A) $-Fleming-Viot processes under the condition $E=\{e_1,e_2\}$ with
$\nu_0(\{e_1\})=1/2=\nu_0(\{e_2\})$. The proofs will be given in the Appendix.

For $\beta\in (0,2)$, the $(\text{Beta}(2-\be,\be),A)$-Fleming-Viot
process is the $(\Xi,A)$-Fleming-Viot process with measure $\Xi$
supported on $[0,1]$ satisfying
\begin{eqnarray}\label{measure}
\Xi(dx)=\frac{\Gamma(2)}{\Gamma(2-\be)\Gamma(\be)}x^{1-\be}\(1-x\)^{\be-1}dx.
\end{eqnarray}

\begin{proposition}\label{beta}
Given any parent independent pure jump generator $A$ on
$E=\{e_1,e_2\}$ with $\nu_0(\{e_1\})=\nu_0(\{e_2\})=\al=1/2$, the
$(\text{Beta}(2-\be,\be),A)$-Fleming-Viot process is not reversible.
\end{proposition}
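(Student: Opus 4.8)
The plan is to exploit the fact that on the two-point space $E=\{e_1,e_2\}$ the whole process collapses to a one-dimensional problem, and then to run the moment argument of Theorem \ref{1117} one order higher than before. Identifying $\mu\in M_1(E)$ with $u:=\mu(\{e_1\})\in[0,1]$, every continuous function on $M_1(E)$ is a function of $u$, and taking $F=\{e_1\}$ in $G_{n,f}$ with $f=1_{F^{\times n}}$ gives $G_{n,f}(\mu)=u^n$. A direct computation from the definition (\ref{eq0506c}) of $L=L_0+\mathbf{B}$ shows that $LG_{n,1_{F^{\times n}}}$ is a polynomial in $u$ whose coefficients are affine in $\theta$, $\sigma^2$ and the collision rates $\beta_{n;k_1,\ldots,k_r;s}$ of the $\mathrm{Beta}(2-\be,\be)$-coalescent; in fact $L$ acts on functions of $u$ as the one-dimensional jump--diffusion generator with Wright--Fisher diffusion term $\tfrac{\sigma^2}{2}u(1-u)$, mutation drift $\tfrac{\theta}{2}(\tfrac12-u)$ toward $\alpha=1/2$, and jumps $u\mapsto z+(1-z)u$ with probability $u$ and $u\mapsto(1-z)u$ with probability $1-u$ at rate $z^{-2}\Xi_0(dz)$.

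Assume for contradiction that the process is reversible with invariant measure $\varPi$, and set $m_j:=\int_{M_1(E)}u^j\,\varPi(d\mu)$. Since for $\alpha=1/2$ the reduced generator commutes with the reflection $u\mapsto 1-u$, uniqueness of the invariant measure (the preceding Lemma applies, as the mutation $Z$ is uniformly ergodic with $\nu_0=\tfrac12\delta_{e_1}+\tfrac12\delta_{e_2}$) forces $\varPi$ to be reflection symmetric, so $m_1=\tfrac12$ and, more generally, $m_j=\int(1-u)^j\,\varPi(d\mu)$. Next I would compute the low-order moments from invariance alone: applying $\int LG_{n,1_{F^{\times n}}}\,d\varPi=0$ for $n=2,3,4$ yields, exactly as in (\ref{eq0920a})--(\ref{eq0915a}), a triangular recursion that determines $m_2,m_3,m_4$ explicitly in terms of $\theta,\sigma^2$ and the Beta rates $\beta_{2;2;0},\beta_{3;3;0},\beta_{3;2;1},\beta_{4;\cdots}$, the latter being evaluated through the Beta integrals coming from (\ref{be}).

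With these moments in hand I would impose a single reversibility relation of the next order in (\ref{eq0915c}) — for instance $p=1$, $q=3$ with $f=1_F$, $g=1_{F^{\times 3}}$, which reads $\int u\,(LG_{3,g})\,d\varPi=\int u^3\,(LG_{1,f})\,d\varPi$ — and substitute the moments just computed. The point is that, unlike the relation $(p,q)=(1,2)$ used in Theorem \ref{1117} whose obstruction (\ref{121123}) carried the factor $(2\alpha-1)$ and hence vanished at $\alpha=1/2$, this higher relation is not annihilated by the reflection symmetry; after inserting the explicit Beta rates and simplifying via the consistency condition, it should collapse to an identity of the form $c(\be,\theta,\sigma^2)\,\beta_{3;3;0}=0$ with $c\neq0$. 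Hence $\beta_{3;3;0}=0$, and Lemma \ref{lem0928a} forces $\Xi_0=0$; but the measure in (\ref{measure}) is a nonzero $\Xi_0$ for every $\be\in(0,2)$, a contradiction, so the $(\mathrm{Beta}(2-\be,\be),A)$-Fleming-Viot process is not reversible.

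The main obstacle is precisely the step where the low-order symmetry cancellation must be overcome. Because $\alpha=1/2$ kills the obstruction that did the work in the generic case, one is forced up to the fourth moment and a higher reversibility relation, and the genuine difficulty is the explicit evaluation of the $\mathrm{Beta}(2-\be,\be)$ collision rates through the integral (\ref{be}) — essentially Beta integrals $\int_0^1 x^{k-1-\be}(1-x)^{\be-1}\,dx$ — together with verifying that the coefficient $c(\be,\theta,\sigma^2)$ multiplying $\beta_{3;3;0}$ in the final equation is nonzero for all admissible $\be$ and all $\theta>0$. This non-vanishing is what has to be checked by hand, and is the reason the symmetric two-point case is handled example by example rather than by a single general theorem.
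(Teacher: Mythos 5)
Your overall machinery (indicator test functions, the reversibility identity (\ref{eq0915c}), moments extracted from invariance) is the same as the paper's, but your key step fails. You claim that after computing $\mathbf{m}_2,\mathbf{m}_3,\mathbf{m}_4$ the single relation $(p,q)=(1,3)$ collapses to $c(\beta,\theta,\sigma^2)\,\beta_{3;3;0}=0$ with $c\neq 0$, so that Lemma \ref{lem0928a} finishes the proof. That is not what this relation gives. In the symmetric case $\alpha=1/2$ the $(1,3)$ relation (Equation (\ref{121123e2})) reduces, after substituting (\ref{11232})--(\ref{11234}), to $\theta B=0$ with
\[
B=(a_4+3a_{22})\,\theta + 12a_4a_2-22a_3a_2+6a_3a_{211}-6a_{211}a_2-8a_4a_3+12a_3^2+6a_2^2 ,
\]
which is \emph{affine} in $\theta$ with strictly positive leading coefficient whenever the coalescent is not Kingman's. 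So instead of forcing $\beta_{3;3;0}=0$, the fourth-moment relation merely pins $\theta$ down to the value (\ref{theta}); it is perfectly compatible with reversibility. Concretely, for the $\mathrm{Beta}(2-\beta,\beta)$ coalescent one has $a_2=1$, $a_3=(2-\beta)/2$, $a_4=(3-\beta)(2-\beta)/6$, $a_{211}=\beta(\beta+1)/6$, $a_{22}=0$, and at $\beta=1$ (the Bolthausen--Sznitman case) the equation $B=0$ is solved by the admissible value $\theta=1$. Your proposed contradiction therefore evaporates exactly where it is needed; the vanishing at $\alpha=1/2$ that killed the relation (\ref{121123}) of Theorem \ref{1117} is not repaired by going up just one order.

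This is precisely why the paper's Appendix does not stop at order four: it computes $\mathbf{m}_5$ and $\mathbf{m}_6$ (Equations (\ref{11235})--(\ref{11236})), imposes the next nontrivial reversibility relation $(p,q)=(1,5)$ (Equation (\ref{121123e5})), substitutes the explicit Beta moments $\mathbb{M}_n=\Gamma(n+2-\beta)/\((n+1)!\,\Gamma(2-\beta)\)$ together with the $\theta$ already forced by (\ref{theta}), and checks that the resulting rational function of $\beta$, namely
\[
\frac{(\beta-2)(\beta-3)(\beta+1)(\beta^4+8\beta^3-39\beta^2+6\beta+72)\,\beta^2}{(\beta^2+3)(\beta^4+6\beta^3-\beta^2-126\beta-72)},
\]
has no zero in $(0,2)$; moreover the final contradiction is of this form, not of the form ``$c\cdot\beta_{3;3;0}=0$'' followed by Lemma \ref{lem0928a}. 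Your reduction to a one-dimensional jump--diffusion in $u=\mu(\{e_1\})$ and the symmetry observation $\mathbf{m}_1=1/2$ are fine, but to complete the argument you must carry the moment computation to order six and run the $(1,5)$ relation; the $(1,3)$ relation alone cannot detect irreversibility when $\alpha=1/2$.
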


\begin{proposition}\label{alpha}
Given any parent independent pure jump generator $A$ on
$E=\{e_1,e_2\}$ with $\nu_0(\{e_1\})=\nu_0(\{e_2\})=\al=1/2$, the
$(\Xi,A)$-Fleming-Viot process with the finite measure $\Xi$ on
$[0,1]$ defined as
$$\Xi(dx)=x^{-\gamma}dx$$ for some $\gamma\in(0,1)$ is not
reversible.
\end{proposition}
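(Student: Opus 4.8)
The plan is to argue by contradiction along the lines of Theorem~\ref{1117}, but to carry the moment computations to fourth order, since the third–order obstruction used there degenerates when $\al=1/2$. Suppose $X$ is reversible, with reversible measure $\Pi$ satisfying $(\ref{eq0915c})$. By Lemma~\ref{rev1} the mutation generator $A$ is a parent–independent pure jump operator, and the symmetry hypothesis $\nu_0(\{e_1\})=\nu_0(\{e_2\})=1/2$ fixes it up to the rate $\th>0$; moreover $\Xi(dx)=x^{-\ga}dx$ is non-atomic at the origin, so $\si^2=0$ and $\Xi_0=\Xi$. Since $E=\{e_1,e_2\}$, each $\mu\in M_1(E)$ is described by the single number $y=\mu(\{e_1\})\in[0,1]$, so the whole argument can be phrased through the moments $m_k:=\int_{M_1(E)}\mu(\{e_1\})^k\,\Pi(d\mu)$, with $m_1=1/2$ by symmetry.

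First I would record the stationarity relations. Taking $q=0$ and $f=1_F^{\otimes k}$ with $F=\{e_1\}$ collapses $(\ref{eq0915c})$ to $\int LG_{k,f}\,d\Pi=0$, a linear recursion expressing $m_k$ through $m_1,\dots,m_{k-1}$, the rate $\th$ and the collision rates $\be_\pi$ over $\cP_k$; for $k=2,3$ these are exactly $(\ref{eq0920a})$ and $(\ref{eq0915a})$, and I would push them to $k=4$. Because $\Xi$ is supported on $[0,1]$, only collisions merging a single group of blocks contribute (all simultaneous rates such as $\be_{4;2,2;0}$ vanish), and these are explicit Beta integrals: a simple $j$–merger among $b$ blocks occurs at rate
\[
\be_{b;j;b-j}=\int_0^1 x^{j-2}(1-x)^{b-j}x^{-\ga}\,dx=B(j-1-\ga,\,b-j+1),
\]
so that $\be_{2;2;0}=1/(1-\ga)$, $\be_{3;3;0}=1/(2-\ga)$, $\be_{3;2;1}=1/[(1-\ga)(2-\ga)]$, together with the fourth–order values $\be_{4;4;0}$, $\be_{4;3;1}$, $\be_{4;2;2}$. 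Feeding these into the recursion yields closed-form rational expressions for $m_2,m_3,m_4$ in $\ga$ and $\th$.

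The content of reversibility beyond stationarity comes from $(\ref{eq0915c})$ with $p,q\ge1$. At total order three the only such choice is $p=1,q=2$, which gave $(\ref{eq0915b})$ and hence $(\ref{121123})$; the latter carries the factor $(2\al-1)$ and so holds trivially at $\al=1/2$, which is exactly why this case falls outside Theorem~\ref{1117}. To recover a genuine constraint I would take $p=2,q=2$ (or $p=1,q=3$), with $f,g$ indicators of products of $F$; this produces a fourth–order identity relating $m_2,m_3,m_4$ to the rates. Inserting the explicit moments and Beta–function rates reduces it to a polynomial relation in $\ga$ and $\th$, and the claim is that—unlike at third order—the associated polynomial is not identically zero and admits no root in the admissible range $\ga\in(0,1)$, $\th>0$, contradicting reversibility. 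The main obstacle is organizational and computational rather than conceptual: assembling the $k=4$ recursion and the $p=2,q=2$ relation demands careful bookkeeping of the partition types of $[4]$ with their multiplicities and rates, and the decisive step is to verify that the resulting polynomial does not degenerate to zero, i.e. that the obstruction which vanished at third order genuinely reappears at fourth order for this family. The proof of Proposition~\ref{beta} is identical in structure, only with the Beta$(2-\be,\be)$ collision rates in place of the $B(j-1-\ga,b-j+1)$ above.
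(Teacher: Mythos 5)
Your overall strategy (stationarity relations for the moments plus genuine reversibility relations with $p,q\ge 1$, exploiting that the third-order obstruction carries the factor $(2\alpha-1)$ and so dies at $\alpha=1/2$) matches the paper's setup, and your Beta-integral formulas for the $\Lambda$-collision rates are correct. But the central claim --- that a fourth-order identity already yields a contradiction --- is false, and this is precisely why the paper's proof goes to sixth order. First, $p=q=2$ gives nothing new: on the two-point space every $G_{2,f}(\mu)$ is a polynomial of degree at most $2$ in $y=\mu(\{e_1\})$, and by bilinearity and antisymmetry of the form $(F,G)\mapsto\int (F\,LG-G\,LF)\,d\Pi$, together with the stationarity relations $\int L(y^j)\,d\Pi=0$, every $p=q=2$ relation reduces to a multiple of the $p=1$, $q=2$ relation, i.e.\ of (\ref{121123}), which vanishes identically at $\alpha=1/2$. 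Second, the only nontrivial fourth-order relation is $p=1$, $q=3$, which is exactly the paper's (\ref{121123e2}); it is linear in $\theta$ and, since here $a_{22}=\beta_{4;2,2;0}=0$ so that $a_4+3a_{22}=a_4=1/(3-\gamma)>0$, it does not force a contradiction but merely determines $\theta$ through (\ref{theta}). Moreover, the value so determined is admissible: for $\gamma=1/2$ one computes $a_2=2$, $a_3=2/3$, $a_4=2/5$, $a_{211}=16/15$, and (\ref{theta}) gives $\theta=8/3>0$ (similarly $\gamma\to 0$ gives $\theta=1$). So the polynomial you hope is ``not identically zero and admits no root in the admissible range'' at fourth order in fact has admissible solutions; the obstruction that vanished at third order does \emph{not} reappear at fourth order.

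The paper's actual proof uses the fourth-order relation only to pin down $\theta$ in terms of the collision rates (equation (\ref{theta})), then computes the stationary moments through $\mathbf{m}_5$ and $\mathbf{m}_6$ (equations (\ref{11235}) and (\ref{11236}), which require the simultaneous-collision rates expressed via the consistency condition (\ref{con26})), and substitutes everything --- the rates built from $\mathbb{M}_n=1/(n+1-\gamma)$, the derived $\theta$, and all six moments --- into the \emph{sixth}-order reversibility relation $p=1$, $q=5$ (equation (\ref{121123e5})). That reduces to
\begin{equation*}
\frac{\gamma-1}{(\gamma-4)(\gamma-6)\left(\gamma^3-14\gamma^2+61\gamma-120\right)}=0,
\end{equation*}
which has no root in $(0,1)$; this is where the contradiction lives. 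Any repair of your argument must therefore go to sixth-order moments (or otherwise produce a relation beyond the $R(y,y^3)$ identity), exactly as the paper does; stopping at order four cannot work.
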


\begin{proposition}\label{star}
Given any parent independent pure jump generator $A$ on
$E=\{e_1,e_2\}$ with $\nu_0(\{e_1\})=\nu_0(\{e_2\})=\al=1/2$, the
$(\Xi,A)$-Fleming-Viot process with $\Xi=\delta_1$, the point mass on
$[0,1]$, is not reversible.
\end{proposition}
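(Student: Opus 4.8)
The plan is to exploit the fact that $\Xi=\delta_1$ collapses the coalescent to its simplest possible form, compute the moments of the unique invariant measure in closed form, and then test the reversibility identity (\ref{eq0915c}) at \emph{fourth} order, where the degeneracy $\alpha=1/2$ (which trivialized the third-order relation (\ref{eq0915b}) in the proof of Theorem \ref{1117}) no longer helps.

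First I would identify the coalescent. Since $\delta_1$ sits at $\mathbf{x}=(1,0,0,\dots)$ we have $\sigma^2=0$ and $\Xi_0=\delta_1$; in (\ref{be}) the factor $(1-\sum_j x_j)^{s-l}$ forces $l=s$, while the distinctness of the summation indices then forces $r+s=1$. Hence the only nonzero rate is $\beta_{b;b;0}=1$: from $b$ blocks the sole admissible transition is the total merger into a single block, occurring at rate $1$. Equivalently, by Lemma \ref{gen-jump} each jump sends $\mu$ to $\delta_{x_1}$ with $x_1\sim\mu$, and (as $\sigma^2=0$) between jumps the process is deterministic mutation flow. Consequently $\tau=\inf\{t:|M_t|=1\}$ is simply $\mathrm{Exp}(1)$, and the moment formula for the invariant measure (part (b) of the invariant-measure lemma) yields, with $F=\{e_1\}$,
\[ m_n:=\int_{M_1(E)}\mu(F)^n\,\Pi(d\mu)=\int_0^\infty e^{-t}\int_E\big(P_t1_F(x)\big)^n\,\nu(dx)\,dt, \]
which is explicit because the symmetric two-state mutation chain gives $P_t1_F(e_1)=\tfrac12(1+e^{-\theta t/2})$ and $P_t1_F(e_2)=\tfrac12(1-e^{-\theta t/2})$.

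Next I would compute $L$ on the test functions $\psi_k(\mu)=\mu(F)^k=G_{k,f}(\mu)$ with $f=1_{F\times\cdots\times F}$. Writing $x=\mu(F)$, the mutation part gives $L_0\psi_k=\tfrac{k\theta}{4}x^{k-1}-\tfrac{k\theta}{2}x^k$ and the resampling part gives $\mathbf{B}\psi_k=x-x^k$, so $L\psi_k(\mu)=\tfrac{k\theta}{4}x^{k-1}-\tfrac{k\theta}{2}x^k+x-x^k$. Thus $\int\psi_pL\psi_q\,d\Pi=\int\psi_qL\psi_p\,d\Pi$ reduces, for every $p,q$, to a linear relation among the $m_k$; the choice $p=1,q=2$ reproduces (\ref{eq0915b}) and is vacuous at $\alpha=1/2$, whereas $p=1,q=3$ gives the genuine fourth-order constraint $\tfrac{\theta}{2}m_3-(\theta+1)m_4+m_2=0$.

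Finally I would substitute the closed forms $m_2=\frac{2+\theta}{4(1+\theta)}$, $m_3=\frac{4+\theta}{8(1+\theta)}$, $m_4=\frac{4+8\theta+\theta^2}{8(1+\theta)(1+2\theta)}$ into this relation. The only real work is the bookkeeping: over the common denominator $16(1+\theta)(1+2\theta)$ the numerator collapses to $-\theta^2$, so $\tfrac{\theta}{2}m_3-(\theta+1)m_4+m_2=-\frac{\theta^2}{16(1+\theta)(1+2\theta)}$, which is strictly negative for every $\theta>0$ and thus never $0$. This contradicts reversibility and proves the claim. The main point to watch is precisely that the degeneracy responsible for the vacuity at third order must genuinely disappear at fourth order; the sign-definite numerator $-\theta^2$ confirms that it does.
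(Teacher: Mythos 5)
Your proof is correct, and its skeleton is the same as the paper's: you test reversibility with the pair $(p,q)=(1,3)$, which is exactly the paper's Equation (\ref{121123e2}) specialized to $a_2=a_3=a_4=1$, $a_{21}=a_{22}=a_{31}=a_{211}=0$, and your closed forms for $\mathbf{m}_2,\mathbf{m}_3,\mathbf{m}_4$ agree with the paper's (\ref{11232})--(\ref{11234}); your numerator $-\theta^2$ is the paper's contradiction $\theta^2/\((1+\theta)(1+2\theta)\)=0$ up to sign and a constant factor. The one point where you genuinely deviate is the derivation of the moments: the paper obtains them from the stationarity relations (the $q=0$ cases of (\ref{eq0915c})) via the general rate formulas of the Appendix, whereas you compute them from the duality representation in part (b) of the invariant-measure lemma, $\mathbf{m}_n=\int_0^\infty e^{-t}\int_E\(P_t1_F(x)\)^n\nu(dx)\,dt$, using that for $\Xi=\delta_1$ the coalescent performs a single total merger at an $\mathrm{Exp}(1)$ time and that the symmetric two-point semigroup is explicit. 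Your route is more self-contained for this special case (no appendix bookkeeping or consistency conditions), at the mild cost of invoking Assumption (I) (which holds here, since the parent-independent mutation chain converges to $\nu_0$) so that part (b) applies to the reversible measure; the paper's route needs only that a reversible measure is invariant. Your observation that the $(p,q)=(1,2)$ relation is automatically satisfied at $\alpha=1/2$ — forcing the fourth-order test — correctly identifies the same degeneracy that makes Theorem \ref{1117} inapplicable in this case.
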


Sagitov \cite{SS} introduced the Poisson-Dirichlet coalescent with
parameter $\epsilon$ as follows. Let $\Xi$ be the Poisson-Dirichlet
distribution $\Pi_{\epsilon}(dx)$ with a positive parameter
$\epsilon$ on the infinite simplex
\begin{equation*}
\Delta^{*}=\left\{\mathbf{x}\in\Delta:
\sum_{i=1}^{\infty}x_i=1\right\}.
\end{equation*}
Then the $\Pi_{\epsilon}$-coalescence rates are
\begin{eqnarray}\label{12320}
\la_{b;k_1,\ldots,k_r;s}=\frac{\epsilon^{r+s}}{\epsilon^{[b]}}\prod_{i=1}^r\(k_i-1\)!
\end{eqnarray}
for all $r\geq 1$, $k_1,\ldots,k_r\geq 2$ and $s\geq 0$, where
$b=k_1+\cdots+k_r+s$ and
$$\epsilon^{[b]}=\epsilon(\epsilon+1)\cdots\(\epsilon+b-1\)$$ is the ascending
factorial power.

\begin{proposition}\label{Poi-Dir}
Given a parent independent pure jump generator $A$ on
$E=\{e_1,e_2\}$ with $\nu_0(\{e_1\})=\nu_0(\{e_2\})=\al=1/2$, the
$(\Xi,A)$-Fleming-Viot process with $\Xi=\Pi_{\epsilon}$ is not
reversible.
\end{proposition}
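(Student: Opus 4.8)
The plan is to follow the same strategy that was used in the proof of Theorem~\ref{1117}, adapting it to the borderline case $\nu_0(\{e_1\})=\nu_0(\{e_2\})=\al=1/2$, which is precisely the symmetric case that Theorem~\ref{1117} is unable to rule out. By Lemma~\ref{rev1} we still know that reversibility forces $A$ to be a parent independent pure jump generator, so the only candidate for a reversible process is the one with $\Xi=\Pi_{\epsilon}$ and the symmetric mutation measure. To derive a contradiction we use the reversibility identity~(\ref{eq0915c}) with the indicator $f=1_F$ for a set $F$ with $\nu_0(F)=1/2$ (e.g. $F=\{e_1\}$), generating moment equations for $\int\mu^k(F)\,\Pi(d\mu)$, $k=1,2,3$, exactly as in~(\ref{eq0920h})--(\ref{eq0915b}). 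The key point is that at $\al=1/2$ the factor $\al(2\al-1)(\al-1)$ in~(\ref{121123}) vanishes identically, so the first three moments alone give no obstruction; I would therefore push the computation to the \emph{fourth} moment, taking larger values of $p,q$ in~(\ref{eq0915c}) (for instance $p=2,q=2$ with $f=g=1_{F\times F}$, together with the lower-order equations) to obtain an additional independent relation among $\int\mu^k(F)\,\Pi(d\mu)$ for $k\le 4$.

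The mechanism for the contradiction is as follows. The moment equations express each $\int\mu^k(F)\,\Pi(d\mu)$ recursively in terms of the lower moments and the coalescence rates $\la_{b;k_1,\ldots,k_r;s}$ (equivalently the $\be_{b;\cdots}$ together with $\si^2$), where $\si^2=0$ here since $\Pi_{\epsilon}$ is supported on $\Delta^{*}$ and carries no Kingman component. The reversibility constraint~(\ref{eq0915c}) for a \emph{given} pair $(p,q)$ then forces an algebraic relation purely among the rates. The explicit formula~(\ref{12320}) makes every rate $\la_{b;k_1,\ldots,k_r;s}$ a fully explicit rational function of $\epsilon$ built from ascending factorials $\epsilon^{[b]}$; consequently I would substitute~(\ref{12320}) into the fourth-moment relation and reduce it to a single polynomial (or rational) equation in $\epsilon$ alone. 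The required rates are $\la_{2;2;0},\la_{3;3;0},\la_{3;2;1},\la_{4;4;0},\la_{4;3;1},\la_{4;2;2},\la_{4;2;1}$ (with the appropriate $s$), each read off from~(\ref{12320}); the consistency condition stated after~(\ref{be}) can be used to cross-check and to eliminate redundant rates. The final step is to verify that the resulting equation in $\epsilon$ has \emph{no} solution with $\epsilon>0$, which gives the desired contradiction and shows the process is not reversible.

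The main obstacle I anticipate is twofold. First, deriving the correct fourth-moment identity from~(\ref{eq0915c}) requires carefully enumerating all partitions $\pi\in\cP_n\setminus\{\mathbf{0}_{[n]}\}$ that contribute to $\mathbf{B}G_{n,f}$ for $n=4$ and matching each to its multiplicity and to the right $\be_\pi$; this bookkeeping is the genuinely error-prone part, and it is where an analog of~(\ref{121123}) must be assembled so that the symmetry factor that killed the third-moment test no longer appears. Second, after substituting~(\ref{12320}) one must show the resulting polynomial identity in $\epsilon$ fails for all $\epsilon>0$; since $\Pi_\epsilon$ has genuine simultaneous-multiple-collision structure (so $\be_{4;2;2}\neq 0$, unlike in a $\La$-coalescent), I expect the fourth moment, rather than the third, to be exactly where the rates $\la_{4;2;2}$ enter and break reversibility. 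As in the other propositions of this section, the computation is explicit enough to be relegated to the Appendix, and I would present only the reduction to the key polynomial inequality in $\epsilon$ here, deferring the arithmetic verification.
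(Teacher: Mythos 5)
Your overall strategy (reversibility identity (\ref{eq0915c}) with indicator test functions, recursive moment equations, substitution of the explicit $\Pi_\epsilon$ rates from (\ref{12320}), reduction to a polynomial equation in $\epsilon$ with no positive root) is indeed the paper's strategy, but there is a genuine gap in where you propose to stop: the fourth moment is provably not enough. First, your concrete suggestion $p=q=2$ with $f=g=1_{F\times F}$ is vacuous — with identical test functions both sides of (\ref{eq0915c}) are literally the same integral, so it yields no constraint. More importantly, on the two-point space every test function reduces by multilinearity to monomials in $x=\mu(F)$, so the only nontrivial reversibility constraint at level $p+q=4$ is the pair $(p,q)=(1,3)$, i.e.\ Equation (\ref{121123e2}). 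At $\alpha=1/2$ this relation does \emph{not} produce a contradiction: as the paper shows, either $a_4+3a_{22}=0$ (in which case Lemma \ref{lem0928a} already forces Kingman), or it simply pins down the mutation rate, $\theta=$ the explicit rational expression (\ref{theta}) in the coalescence rates. Since $\theta>0$ is a free parameter of the given generator $A$, a single scalar relation can always be absorbed into the choice of $\theta$; to rule out reversibility for \emph{every} $\theta$ you need a second, independent relation in which $\theta$ has been eliminated.

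That is exactly what the paper does and what your proposal is missing: it computes $\mathbf{m}_5$ and $\mathbf{m}_6$ from the stationarity equations $(p,q)=(5,0)$ and $(6,0)$ (Equations (\ref{11235})--(\ref{11236}), which require the full list of six-block rates and the consistency relations (\ref{con26})), then imposes the level-six constraint $(p,q)=(1,5)$ (Equation (\ref{121123e5})), and substitutes $\theta$ from (\ref{theta}). Only after this elimination does one obtain an equation purely in the rates; for $\Xi=\Pi_\epsilon$ it becomes
\begin{equation*}
\frac{\left(10\epsilon^2+11\epsilon+6\right)\epsilon^2}
{\left(5\epsilon+6\right)\left(6+11\epsilon\right)\left(17\epsilon^4+109\epsilon^3+319\epsilon^2+394\epsilon+120\right)\left(1+\epsilon\right)}=0,
\end{equation*}
which has no solution $\epsilon>0$, giving the contradiction. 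So your proof plan would stall at level four: after assembling the fourth-moment identity you would find it satisfiable by the choice (\ref{theta}) of $\theta$, not contradictory. The fix is to push the moment hierarchy up two more levels (through $\mathbf{m}_6$) and use the $(1,5)$ relation with $\theta$ eliminated; your incidental claims (that $\sigma^2=0$ for $\Pi_\epsilon$ and that Lemma \ref{rev1} reduces to parent-independent mutation) are correct but do not repair this gap.
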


\begin{remark}
For  $E=\{e_1,e_2\}$ with
$\nu_0(\{e_1\})=1/2=\nu_0(\{e_2\})$, we conjecture that the $\Xi$-Fleming-Viot process is reversible only if it degenerates to the classical Fleming-Viot process.
\end{remark}

\section{An SPDE for the $\left(\Xi, \frac{1}{2}\Delta\right)$-Fleming-Viot process}\label{sec5}

In this section we switch to a different topic and consider the weak and strong uniqueness for solution to an SPDE associated to the $\left(\Xi, \frac{1}{2}\Delta\right)$-Fleming-Viot process, where $\Delta=\frac{d^2}{dx^2}$ denotes the  Laplacian
mutation generator. Note that $\Delta$ is also used to represent the set of
infinite dimensional simplex in this paper.
Throughout this section we assume the type space $E$ to be the real
line $\bR$.

Next, we hope to characterize the $(\Xi,\frac{1}{2}\Delta)$-Fleming-Viot process by an SPDE
which has strong uniqueness. Here we adapt the approach of Xiong
\cite{X}.
For a measure $X$, we define the
distribution function
\begin{equation}\label{eq0915d}
u(x)=X((-\infty,x]),\qquad\forall x\in\RR.\end{equation}
Let $u^{-1}_{s-}(y)=\inf\{x:u_{s-}(x)\geq y\}$. Consider the
following SPDE
\begin{eqnarray}
u_t(x)&=&u_0(x)+\si\int^t_0\int^1_0\(1_{y\le
u_{s-}(x)}-u_{s-}(x)\)B(ds
dy)+\int^t_0\frac{1}{2}\Delta u_{s-}(x)ds\nonumber\\
&&+\int^t_0\int_\Delta\int_{[0,1]^\mathbb{N}}\(\sum_{i=1}^\infty z_i1_{y_i\le
u_{s-}(x)}-\sum_{i=1}^\infty z_i u_{s-}(x)\)M(ds dz dy),\label{eq0915e}
\end{eqnarray}
where $B(ds dy)$ is a white noise on $\RR_+\times(0,1]$ with
intensity $ds dy$ and $M(ds dz dy)$ is an independent Poisson random
measure on $\RR_+\times\Delta\times [0,1]^\mathbb{N}$ with intensity
$ds \(\sum_{i=1}^\infty z_i^2\)^{-1}\Xi_0(dz)
(dy)^{\otimes\mathbb{N}}$.

Denote $\< f,g\>:=\int_\RR f(x)g(x)dx$ for functions $f$ and $g$ on $\RR$.
We say an $\RR$-valued random field $u=\{u_t(x): t\geq0, x\in\RR \}$ is a solution to the SPDE
(\ref{eq0915e}) if for any $f\in C^2_0(\RR)$, the collection of compactly supported functions on
$\RR$ with continuous second derivatives,
\begin{equation*}
\begin{split}
\<u_t,f\>=& \<u_0,f\>+\si\int^t_0\int^1_0\int_\RR\(1_{y\le
u_{s-}(x)}-u_{s-}(x)\)f(x)dx B(dsdy)+\int^t_0\<u_{s-},\frac{1}{2}f''\>ds\\
&+\int^t_0\int_\Delta\int_{[0,1]^\mathbb{N}}\int_\RR\(\sum_{i=1}^\infty
z_i1_{y_i\le
u_{s-}(x)}-\sum_{i=1}^\infty z_i u_{s-}(x)\)f(x)dx M(ds dz dy).\\
\end{split}
\end{equation*}

A weak solution to the SPDE (\ref{eq0915e}) is a probability space
$(\Omega,\mathcal{F}, \mathbb{P})$ together with a filtration
$(\mathcal{F}_t)$ and adapted processes $(u,B,M)$ such that
(\ref{eq0915e}) holds.

\begin{theorem}
The SPDE (\ref{eq0915e}) has a weak solution. If the random field
$u_t(x)$ is a solution to the SPDE (\ref{eq0915e}), then the
measure-valued process $X$ defined by (\ref{eq0915d}) is a
$(\Xi, \frac{1}{2}\Delta)$-Fleming-Viot process.
Consequently, the SPDE (\ref{eq0915e}) has a unique weak solution.
\end{theorem}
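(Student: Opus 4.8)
\emph{Strategy.} The plan is to transfer the statement to the martingale problem GFVMP of Section~\ref{sec2} via the one-to-one correspondence $\mu\leftrightarrow u$, $u(x)=\mu((-\infty,x])$, between $M_1(\RR)$ and right-continuous distribution functions, and then to invoke the well-posedness already proved in Theorem~\ref{SPDE-weak} with $A=\tfrac12\Delta$. Two elementary identities drive everything. First, the integration-by-parts relation $\langle X_t,g\rangle=-\langle u_t,g'\rangle$ holds for every $g\in C_0^2(\RR)$, because $u_t=X_t((-\infty,\cdot])$ and $X_t$ is a probability measure. Second, the quantile identity: if $U$ is uniform on $[0,1]$ then $u_{s-}^{-1}(U)$ has law $X_{s-}$, so $y\mapsto u_{s-}^{-1}(y)$ pushes Lebesgue measure on $[0,1]$ forward to $X_{s-}$ and $\int_0^1\phi(u_{s-}^{-1}(y))\,dy=\langle X_{s-},\phi\rangle$ for every bounded $\phi$.

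\emph{Identification.} Suppose $u$ solves (\ref{eq0915e}) and define $X$ by (\ref{eq0915d}). Fix a GFVMP test function $g$ with $-g'\in C_0^2(\RR)$ and apply the weak form of (\ref{eq0915e}) with test function $-g'$. The drift term becomes $\langle u_{s-},\tfrac12(-g')''\rangle=-\tfrac12\langle u_{s-},g'''\rangle=\langle X_{s-},\tfrac12 g''\rangle$, reproducing $\int_0^t\langle X_s,\tfrac12\Delta g\rangle\,ds$ in (\ref{eq0913a}). Integrating $-g'$ in $x$ against $1_{y\le u_{s-}(\cdot)}-u_{s-}(\cdot)$ and using that $g$ vanishes at $\pm\infty$ yields $g(u_{s-}^{-1}(y))-\langle X_{s-},g\rangle$; hence the continuous martingale part of $\langle X_t,g\rangle$ has quadratic variation $\si^2\int_0^t\int_0^1\big(g(u_{s}^{-1}(y))-\langle X_s,g\rangle\big)^2\,dy\,ds$, which by the quantile identity equals $\si^2\int_0^t\big(\langle X_s,g^2\rangle-\langle X_s,g\rangle^2\big)\,ds$, i.e.\ (\ref{eq0913a2}). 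The same $x$-integration converts the Poisson term into $\sum_i z_i\big(g(u_{s-}^{-1}(y_i))-\langle X_{s-},g\rangle\big)=\langle\sum_i z_i(\delta_{u_{s-}^{-1}(y_i)}-X_{s-}),g\rangle$, which is the distribution-function form of the jump in Lemma~\ref{gen-jump}; since $(z,y)\mapsto(z,(u_{s-}^{-1}(y_i))_i)$ carries the intensity of $M$ onto $ds\,(\sum_i z_i^2)^{-1}\Xi_0(dz)\,X_{s-}^{\mathbb{N}}(dx)=\ga(ds\,dz\,dx)$, the compensated jump condition (\ref{eq0913a3}) follows. Thus $X$ solves the GFVMP, and Theorem~\ref{SPDE-weak} identifies it as a $(\Xi,\tfrac12\Delta)$-Fleming-Viot process.

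\emph{Existence.} I would build a weak solution by mirroring the proof of Theorem~\ref{SPDE-weak}. When $(\sum_i z_i^2)^{-1}\Xi_0$ is finite, let the white noise $B$ drive the distribution-function SPDE of the classical (Kingman) Fleming-Viot process over $\RR$, for which Xiong \cite{X} supplies a solution; between the atoms of an independent Poisson random measure $M$ of the prescribed finite intensity let $u$ follow this equation, and at each atom $(z,y)$ reset $u_{\tau}=\sum_i z_i 1_{y_i\le u_{\tau-}}+(1-\sum_i z_i)u_{\tau-}$. For general $\Xi_0$ one approximates by $\nu_n(dz)=1_{\{\sum_i z_i^2>1/n\}}\Xi_0(dz)$ and passes to the limit through the tightness and martingale-identification arguments of Theorem~\ref{SPDE-weak} (using \cite{JS} and \cite{KX}); the limit together with its driving noises is a weak solution of (\ref{eq0915e}). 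Alternatively one may take a $(\Xi,\tfrac12\Delta)$-Fleming-Viot process from Theorem~\ref{SPDE-weak} and realize $B$ and $M$ by a martingale representation on an enlarged probability space.

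\emph{Uniqueness and main obstacle.} Weak uniqueness is then immediate: any solution $u$ determines, and is determined by, $X$ through (\ref{eq0915d}), and $X$ is a $(\Xi,\tfrac12\Delta)$-Fleming-Viot process, whose law is fixed by the moment duality (\ref{moment-dual}) and Theorem~\ref{SPDE-weak}; hence the law of $u$ is unique. I expect the real difficulty to lie in the existence step, namely in producing the driving white noise and Poisson random measure with exactly the required deterministic intensities and in verifying that the quantile representation $x_i=u_{s-}^{-1}(y_i)$ persists through the limiting procedure; once the integration-by-parts and quantile identities are in hand, the identification computations are routine.
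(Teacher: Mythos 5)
Your proposal is correct and follows essentially the same route as the paper: integrate the weak form of (\ref{eq0915e}) against derivatives of test functions, use the quantile identity $\int_0^1\phi(u_{s}^{-1}(y))\,dy=\langle X_{s},\phi\rangle$ to compute the quadratic variation and the jump compensator, conclude that $X$ solves the GFVMP, and then invoke Theorem~\ref{SPDE-weak} for both the identification and the existence of a weak solution. Your existence step is in fact spelled out in more detail than the paper's (which simply cites Theorem~\ref{SPDE-weak}), but the underlying idea is the same.
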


\begin{proof}
Suppose that the SPDE (\ref{eq0915e}) has a solution.  For any $f\in
C^3_0(\RR)$, we have
\begin{eqnarray*}
\<X_t,f\>&=&-\<u_t,f'\>\\
&=&-\<u_0,f'\>-\si\int^t_0\int^1_0\int_\RR\(1_{y\le
u_{s-}(x)}-u_{s-}(x)\)f'(x)dx B(ds
dy)-\int^t_0\<u_{s-},\frac{1}{2} f'''\>ds\\
&&-\int^t_0\int_\Delta\int_{[0,1]^\mathbb{N}}\int_\RR\(\sum_{i=1}^\infty
z_i1_{y_i\le
u_{s-}(x)}-\sum_{i=1}^\infty z_i u_{s-}(x)\)f'(x)dx M(ds dz dy)\\
&=&\<X_0,f\>-\si\int^t_0\int^1_0\(\int^\infty_{u^{-1}_{s-}(y)}df(x)-\<u_{s-},f'\>\)
B(ds
dy)+\int^t_0\<X_{s-},\frac{1}{2}f''\>ds\\
&&-\int^t_0\int_\Delta\int_{[0,1]^\mathbb{N}}\(\sum_{i=1}^\infty
z_i\int^\infty_{u^{-1}_{s-}(y_i)}df(x)
-\sum_{i=1}^\infty z_i\<u_{s-},f'\>\) M(ds dz dy)\\
&=&\<X_0,f\>+\si\int^t_0\int^1_0\(f(u^{-1}_{s-}(y))-\< X_{s-},f\>\)
B(ds
dy)+\int^t_0\< X_{s-},\frac{1}{2} f''\>ds\\
&&+\int^t_0\int_\Delta\int_{[0,1]^\mathbb{N}}\(\sum_{i=1}^\infty
z_if(u^{-1}_{s-}(y_i))-\sum_{i=1}^\infty z_i\<X_{s-},f\>\) M(ds dz
dy).
\end{eqnarray*}

 Thus,
\[M_t(f)=\<X_t,f\>-\<X_0,f\>-\int^t_0\<X_{s-},\frac{1}{2}f''\>ds\]
is a square-integrable martingale with continuous part
\[M^c_t(f)=\si\int^t_0\int^1_0\(f(u^{-1}_{s-}(y))-\<X_{s-},f\>\)
B(ds dy)\] and pure jump part
\[M^d_t(f)=\int^t_0\int_\Delta\int_{[0,1]^\mathbb{N}}
\(\sum_{i=1}^\infty z_i f(u^{-1}_{s-}(y_i))-\sum_{i=1}^\infty
z_i\<X_{s-},f\>\) M(ds dz dy).\]
 Then
\begin{eqnarray*}
\<M^c(f)\>_t&=&\si^2\int^t_0\int^1_0\(f(u^{-1}_{s-}(y))-\<X_{s-},f\>\)^2dy
ds\\
&=&\si^2\int^t_0\int^1_0\(f(u^{-1}_{s}(y))^2-2\<X_{s},f\>f(u^{-1}_{s}(y))+\<X_{s},f\>^2\)dy
ds\\
&=&\si^2\int^t_0\(\<X_{s},f^2\>-\<X_{s},f\>^2\)ds.
\end{eqnarray*}
Further,
\[\sum_{0<s\le t}1_B(\De
M_s)-\int^t_0\int_\Delta\int_{[0,1]^\mathbb{N}}1_B\(\sum^\infty_{i=1}z_i\(\de_{u^{-1}_{s-}(y_i)}-X_{s-}\)\)
\(\sum_{i=1}^\infty z_i^2\)^{-1}\Xi_0(dz) dy ds \] is a martingale.
As
\begin{eqnarray*}
&&\int^t_0\int_\Delta\int_{[0,1]^\mathbb{N}}1_B\(\sum^\infty_{i=1}z_i\(\de_{u^{-1}_{s-}(y_i)}-X_{s-}\)\)
\(\sum_{i=1}^\infty z_i^2\)^{-1}\Xi_0(dz) dy ds\\
&=&\int^t_0\int_\Delta\int_{\mathbb{R}^\mathbb{N}}1_B\(\sum^\infty_{i=1}z_i\(\de_{x_i}-X_{s}\)\)\ga(ds
dz dx),
\end{eqnarray*}
we see that
\[\sum_{0<s\le t}1_B(\De
M_s)-\int^t_0\int_\Delta\int_{\mathbb{R}^\mathbb{N}}1_B\(\sum^\infty_{i=1}z_i\(\de_{x_i}-X_{s}\)\)\ga(ds
dz dx)\] is a martingale. Therefore, $X_t$ is the generalized Fleming-Viot
process.

The existence of weak solution follows from Theorem \ref{SPDE-weak}.
 \qed
\end{proof}

Next, we consider the backward version with $T$ fixed:
\[v_t(x)=u_{T-t}(x),\qquad \forall\ t\in[0,T],\;\;x\in\RR.\]
Note that $v_t$ is left-continuous with right limit. We also define
the backward random measure
\[\tilde{B}([0,s]\times A)=B([T-s,T]\times A),\qquad \forall\
s\in[0,T],\;A\in\cB([0,1]).\] The random measure $\tilde{M}$ is
defined similarly. Then, $\{v_t(x)\}$ satisfies the following
backward SPDE:
\begin{eqnarray}\label{eq0915f}
v_t(x)&=&u_0(x)+\sigma\int^T_t\int^1_0\(1_{y\le
v_{s+}(x)}-v_{s+}(x)\)\tilde{B}(\hat{d}s dy)+\int^T_t\frac{1}{2}\Delta
v_{s+}(x)ds\nonumber\\
&&+\int^T_t\int_\Delta\int_{[0,1]^\mathbb{N}} \(\sum_{i=1}^\infty
z_i 1_{y_i\le v_{s+}(x)}-\sum_{i=1}^\infty z_i
v_{s+}(x)\)\tilde{M}(\hat{d}s dz dy),
\end{eqnarray}
where $\hat{d}s$ denotes the backward It\^o's integral.

It is clear that if (\ref{eq0915f}) has a unique solution, so does
(\ref{eq0915e}), and vice versa. To prove the uniqueness of the
solution to (\ref{eq0915f}), we adapt the idea of Xiong \cite{X} by
relating it to a backward triply stochastic differential equation
(BTSDE). Denote
\[W^{t,x}_s=x+W_s-W_t,\qquad\forall\ t\le s\le T,\]
where $W_t$ is a one-dimensional standard Brownian motion.

 Fix $t$ and $x$, we consider the following BTSDE: For $t\le s\le
 T$,
 \begin{eqnarray}\label{eq0915g}
 Y^{t,x}_s&=&u_0(W^{t,x}_T)+\sigma\int^T_s\int^1_0\(1_{y\le
 Y^{t,x}_{r+}}-Y^{t,x}_{r+}\)\tilde{B}(\hat{d}r dy)\nonumber\\
&&+\int^T_s\int_\Delta\int_{[0,1]^\mathbb{N}} \(\sum_{i=1}^\infty
z_i 1_{y_i\le
 Y^{t,x}_{r+}}-\sum_{i=1}^\infty z_i Y^{t,x}_{r+}\)\tilde{M}(\hat{d}r dz dy)
 -\int^T_sZ^{t,x}_{r}dW_r.
 \end{eqnarray}
Let $\cG_t=\sigma(\cG^1_t,\cG^2_t)$ where $\cG^1_t$
 (non-decreasing) and $\cG^2_t$ (non-increasing) are independent sigma
 field families such that for any $t$, $\cF^W_t\subset \cG^1_t$ and
 $\cF^{B.M}_{t,T}\subset\cG^2_t$, where
 \begin{equation}\label{eq1220a}
\cF^{B.M}_{t,T}=\si\(\begin{array}{c}
B([r,T]\times A_1),\; M([r,T]\times A_2\times A_3),\\
r\in[t,T],\;A_1\in\cB([0,1]),\; A_2\in\cB(\De)\mbox{ and
}A_3\in\cB([0,1]^\mathbb{N}) \end{array}\).\end{equation} The
solution $(Y^{t,x}_s,Z^{t,x}_s)$ to the BTSDE (\ref{eq0915g}) is
$\cG_s$-adapted.

To establish the connection between the backward SPDE
(\ref{eq0915f}) and the BTSDE (\ref{eq0915g}), we need the following
extended It\^o's formula.

\begin{lemma}
Let $Y_t$ be a process given by
\[Y_t=Y_T+\int^T_t\int_{U_1}G_1(r,u)B(\hat{d}r du)+\int^T_t\int_{U_2} G_2(r,v)\tilde{M}(\hat{d}r dv)
 -\int^T_tZ_{r}dW_r,\]
where $B$ is a Gaussian white noise with intensity $\mu_1(du)ds$ and
$M$ is a Poisson random measure with intensity $\mu_2(dv)ds$.
Further, $G_i:\;[0, T]\times U_i\times\Om\to\RR$, $i=1,2,$ are
$\cG_t$ adapted, where $\cG_t$ is defined by (\ref{eq1220a}) with
obvious modification. Then, for any $f\in C^2(\RR)$, we have
 \begin{eqnarray}\label{eq0916a}
 f(Y_t)&=&f(Y_T)+\int^T_t\int_{U_1}f'(Y_s)G_1(s,u)B(\hat{d}r
 du)-\int^T_tf'(Y_s)Z_sdW_s\nonumber\\
 &&+\int^T_t\int_{U_2}\(f(Y_{s+}+G_2(s,v))-f(Y_{s+})\)\tilde{M}(\hat{d}r dv)\nonumber\\
&& +\int^T_t\int_{U_2}\(f(Y_{s}+G_2(s,v))-f(Y_{s})-f'(Y_{s})G_2(s,v)\)\mu_2(dv)ds\nonumber\\
 &&+\frac12\int^T_t\int_{U_1}f''(Y_s)G_1(s,u)^2\mu_1(du)ds-\frac12\int^T_tZ^2_sf''(Y_s)ds.
 \end{eqnarray}
 \end{lemma}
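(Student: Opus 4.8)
The plan is to establish the formula by time-discretization and second-order Taylor expansion, within the backward doubly stochastic calculus used in Xiong \cite{X}, after first reducing the Poisson part to a locally finite one. The three drivers of $Y$ must be treated differently: $\int_t^T Z_r\,dW_r$ is a forward It\^o integral whose $\cG^1_s$-adapted integrand is sampled at left endpoints, whereas $\int_t^T\int_{U_1}G_1\,B(\hat d r\,du)$ and $\int_t^T\int_{U_2}G_2\,\tilde M(\hat d r\,dv)$ are backward integrals whose $\cG^2_s$-adapted integrands are sampled at right endpoints; this right-endpoint sampling is the reason the jump-martingale term is written with $Y_{s+}$ rather than $Y_{s-}$. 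The decisive structural input is the independence of the non-decreasing family $\cG^1_s$ from the non-increasing family $\cG^2_s$: it forces the cross-variation of the forward martingale $\int Z\,dW$ with the backward martingale $\int G_1\,B(\hat d r\,du)$ to vanish, so that no mixed second-order term appears.

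First I would reduce to $\mu_2$ of finite total mass by truncating the jump part to $\{|G_2|>1/n\}$, so that $\tilde M$ has only finitely many atoms in $[t,T]\times U_2$ almost surely; the discarded small jumps are carried along as an $L^2$-bounded compensated integral and controlled in the final limit. In this finite-jump case I enumerate the jump times $t<\tau_1<\cdots<\tau_N<T$. On each interval between consecutive jumps $Y$ is driven only by $B$ and $W$, and there I apply the continuous backward doubly stochastic It\^o formula: a Riemann-sum/Taylor argument produces the first-order terms $\int f'(Y)G_1\,B(\hat d r\,du)$ and $-\int f'(Y)Z\,dW$, together with the two second-order corrections $+\tfrac12\int f''(Y)G_1^2\,\mu_1(du)\,ds$ (the plus sign is the signature of the right-endpoint backward integral) and $-\tfrac12\int Z^2 f''(Y)\,ds$ (the minus sign comes from the backward form $Y_t=Y_T-\int_t^T Z\,dW+\cdots$, which makes $dY=Z\,dW$ forward in $t$). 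Telescoping over the $N+1$ sub-intervals reassembles these into the full integrals over $[t,T]$.

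Across each jump $\tau_k$ the increment of $f(Y)$ equals exactly $f(Y_{\tau_k+}+G_2(\tau_k,v_k))-f(Y_{\tau_k+})$; summing over the jumps gives $\int_t^T\int_{U_2}\bigl(f(Y_{s+}+G_2)-f(Y_{s+})\bigr)$ integrated against the uncompensated Poisson measure. Decomposing $\int G_2\,\tilde M$ into its uncompensated jumps of size $G_2$ plus the drift $-\int G_2\,\mu_2(dv)\,ds$, and pushing that drift through $f'$, I rewrite this against the compensated measure $\tilde M$ and obtain simultaneously the jump-martingale term against $\tilde M$ and the compensator $\int_t^T\int_{U_2}\bigl(f(Y_s+G_2)-f(Y_s)-f'(Y_s)G_2\bigr)\mu_2(dv)\,ds$; since $Y_{s+}=Y_s$ for Lebesgue-almost every $s$, the base point in the absolutely continuous compensator is immaterial. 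Finally, letting $n\to\infty$ removes the truncation: the compensated jump integrals converge in $L^2$ by the Poisson isometry and the bracket and compensator integrals converge by dominated convergence, using $f\in C^2$ and the standing $L^2$-bounds on $Y,Z,G_1,G_2$ (a preliminary localization by stopping times secures the integrability and is removed at the end).

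The step I expect to be the main obstacle is the continuous backward doubly stochastic formula of the second paragraph, where a forward It\^o integral and a backward It\^o integral run simultaneously. From the Taylor remainders in the discrete sums one must show that (i) the forward and backward quadratic-variation terms carry opposite signs and (ii) the forward--backward mixed product tends to zero rather than producing a Stratonovich-type correction. Both hinge on the independence $\cG^1_s\perp\cG^2_s$ together with the left-endpoint sampling for the $dW$-sums and the right-endpoint sampling for the $B(\hat d r\,du)$-sums; controlling the mesh-refinement limit of the mixed sums is the delicate estimate, and it is precisely here that the argument of \cite{X} must be extended to accommodate the extra Poisson integral.
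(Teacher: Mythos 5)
Your proposal is correct and takes essentially the same route the paper intends: the paper omits the proof entirely, stating only that it ``follows from that of the standard It\^o's formula for semimartingale driven by Brownian motion and Poisson random measure'' (Ikeda--Watanabe, Chapter II), and your discretization/Taylor argument---truncating to finitely many large jumps, enumerating jump times, telescoping over subintervals, and passing to the limit in $L^2$---is precisely that standard argument carried over to the forward--backward setting. The points you isolate as delicate (left-endpoint sampling for the $dW$ integral versus right-endpoint sampling for the backward integrals, the resulting opposite signs of the two second-order correction terms, and the vanishing of the mixed forward--backward variation by independence of $\cG^1_s$ and $\cG^2_s$) are exactly the ``obvious modifications'' the paper leaves implicit, so there is no gap between your outline and the paper's intended proof.
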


The proof of (\ref{eq0916a}) follows from that of the standard It\^o's
formula for semimartingale driven by Brownian motion and Poisson
random measure; see Chapter II of {\cite{IkWa89}}. Here, we omit the detail.

Since the second order moments of $X_t$ are the same as those for
the classical Fleming-Viot process, by the same arguments as in the
proof of Lemma 2.3 of Konno and Shiga \cite{KoSh88} we can show that
almost surely, $X_t$ has a density for almost all $t$.

Let $T_\de$ be the Brownian semigroup with $p_\delta$ the corresponding
heat kernel.

\begin{lemma}
Suppose that $X_0$ has a density $\partial_x u_0\in L^2(\RR,
e^{-|x|}dx)$. Then, there exists $h\in L^2([0,T]\times\RR,
e^{-2|x|}dx dt)$ such that
\[\lim_{\de\to 0+}\EE\int^T_0\int_\RR (T_\de X_t(x)-h_t(x))^2 e^{-2|x|}dx
dt=0.\]
\end{lemma}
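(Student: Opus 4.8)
The plan is to exhibit $\{T_\de X\}_{\de>0}$ as a Cauchy family in the Hilbert space $H:=L^2([0,T]\times\RR\times\Om,\;e^{-2|x|}dx\,dt\,d\PP)$ and to take $h$ to be its limit, which exists by completeness of $H$. Since $X_t$ is a probability measure, $T_\de X_t(x)=\<X_t,p_\de(x-\cdot)\>$, where $p_\de$ is the heat kernel of $T_\de$, so the whole problem reduces to second-moment computations for $X_t$, exactly as in Lemma~2.3 of Konno and Shiga \cite{KoSh88} and in Xiong \cite{X}. I shall use the criterion that $\{T_\de X\}$ converges as $\de\to0$ as soon as the inner products $\<T_\de X,T_{\de'}X\>_H$ tend to a common finite limit $L$ as $(\de,\de')\to(0,0)$; indeed then $\|T_\de X-T_{\de'}X\|_H^2\to L-2L+L=0$. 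It therefore suffices to analyze $\EE[T_\de X_t(x)T_{\de'}X_t(x)]$.

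By the observation preceding the lemma, the two-point motion of $X$ coincides with that of the classical Fleming-Viot process: for only two lineages the $\Xi$-coalescent reduces to a single coalescence at the total rate $c:=\si^2+\be_{2;2;0}$. Hence the moment identity already used in the proof of Lemma \ref{rev1} gives
\[\EE\<X_t^{\otimes2},\varphi_1\otimes\varphi_2\>=e^{-ct}\<X_0^{\otimes2},P_t\varphi_1\otimes P_t\varphi_2\>+\int_0^t ce^{-cs}\<X_0,P_{t-s}[(P_s\varphi_1)(P_s\varphi_2)]\>\,ds.\]
Taking $\varphi_1=p_\de(x-\cdot)$, $\varphi_2=p_{\de'}(x-\cdot)$ and using the heat-semigroup identity $P_sp_\de(x-\cdot)=p_{\de+s}(x-\cdot)$, the first (no-coalescence) term equals $e^{-ct}T_{\de+t}X_0(x)\,T_{\de'+t}X_0(x)$, which converges to $e^{-ct}(T_tX_0(x))^2$ and is dominated by $(T_tX_0(x))^2$. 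Writing $\rho_0:=\partial_xu_0$ and using the Gaussian weight bound $\int_\RR p_a(x-w)e^{-2|x|}dx\le C_Te^{-|w|}$ valid for $a\le 2T$, the hypothesis $\rho_0\in L^2(\RR,e^{-|x|}dx)$ yields $\int_0^T\int_\RR (T_tX_0(x))^2 e^{-2|x|}dx\,dt<\infty$, so this term is controlled by dominated convergence.

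The main obstacle is the coalescence term, where the product of heat kernels becomes singular as $s,\de,\de'\to0$. Using $p_a(u)p_b(u)=p_{a+b}(0)\,p_{ab/(a+b)}(u)$ and the semigroup property once more, that term equals
\[\int_0^t \frac{ce^{-cs}}{\sqrt{2\pi(\de+\de'+2s)}}\,T_{t-s+\theta(s)}X_0(x)\,ds,\qquad\theta(s)=\frac{(\de+s)(\de'+s)}{\de+\de'+2s},\]
which converges, as $(\de,\de')\to(0,0)$, to $\int_0^t ce^{-cs}(4\pi s)^{-1/2}T_{t-s/2}X_0(x)\,ds$; this limit does not depend on the manner in which $\de$ and $\de'$ vanish, so the inner products $\<T_\de X,T_{\de'}X\>_H$ indeed admit a common limit. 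The singular factor obeys $(\de+\de'+2s)^{-1/2}\le(2s)^{-1/2}$, which is integrable at $s=0$, and since $\int_\RR T_aX_0(x)e^{-2|x|}dx\le C_T$ uniformly for $a\in(0,2T]$ (again by the weight bound together with $\int_\RR\rho_0\,dx=1$), the integrand is dominated, uniformly in $\de,\de'$, by a function integrable over $[0,T]\times\RR\times[0,t]$ against the weight. Dominated convergence then gives the convergence of $\<T_\de X,T_{\de'}X\>_H$.

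Combining the two terms, $\<T_\de X,T_{\de'}X\>_H\to L<\infty$ as $(\de,\de')\to(0,0)$, so $\{T_\de X\}_{\de>0}$ is Cauchy in $H$ and converges to some $h\in H$; this is the desired density process, which one identifies with $\partial_xu_t(x)$, the density of $X_t$ for almost every $t$. The delicate point throughout is the control of the coalescence term near $s=0$; the exponential weights $e^{-2|x|}$ and $e^{-|x|}$ and the $L^2$-hypothesis on the initial density are precisely what render its $s^{-1/2}$ singularity integrable.
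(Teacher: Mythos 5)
Your overall strategy is sound and genuinely different from the paper's in its key device. The paper estimates $\EE\int_0^T\int_\RR(T_\de X_t-T_{\de'}X_t)^2e^{-2|x|}dx\,dt$ directly: it uses the same duality-based two-point moment formula you quote, but then forces both terms to zero via a quantitative kernel estimate $|p_{t+\de}(z)-p_{t+\de'}(z)|\le t^{-2}|\de-\de'|C(T)(1+z^2)(p_{t+\de}(z)+p_{t+\de'}(z))$, which yields explicit rates $|\de-\de'|^\al$. You instead compute the mixed inner products $\<T_\de X,T_{\de'}X\>_H$ exactly, using the Gaussian product identity, and pass to a common limit; the polarization criterion you invoke is valid, and your pair-coalescence rate $c=\si^2+\be_{2;2;0}$ is the correct full rate. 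Your analysis of the coalescence term is essentially right: the $s^{-1/2}$ singularity is integrable, and a genuinely $(\de,\de')$-uniform dominating function does exist because $t-s+\theta(s)\ge t-s/2\ge t/2$ gives $T_{t-s+\theta(s)}X_0(x)\le(\pi t)^{-1/2}$, so the integrand is dominated by $Ce^{-2|x|}e^{-cs}s^{-1/2}t^{-1/2}1_{\{s<t\}}$. (Your stated domination, which keeps $\theta(s)$ inside $T$ and appeals to the integrated bound $\int T_aX_0(x)e^{-2|x|}dx\le C_T$, should be replaced by this, since a dominating function may not depend on $\de,\de'$.)

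The genuine flaw is in the no-coalescence term: the claim that $T_{t+\de}X_0(x)\,T_{t+\de'}X_0(x)$ is ``dominated by $(T_tX_0(x))^2$'' is false. The heat semigroup does not decrease pointwise; for $X_0$ with, say, compactly supported density one has $T_{t+\de}X_0(x)/T_tX_0(x)\to\infty$ as $|x|\to\infty$, and the weight $e^{-2|x|}$ decays far too slowly to absorb the widening Gaussian tails, so dominated convergence cannot be invoked this way. The statement you need is nevertheless true and repairable within your framework, without the paper's kernel estimate: since $\Phi(t):=\int_\RR(T_tX_0(x))^2e^{-2|x|}dx\le C\|\partial_xu_0\|^2_{L^2(\RR,e^{-|x|}dx)}$ uniformly for $t\in(0,2T]$ (your Cauchy--Schwarz weight argument gives exactly this), a change of variables shows
\[\int_0^T\int_\RR (T_{t+\de}X_0(x))^2e^{-2|x|}dx\,dt=\int_\de^{T+\de}\Phi(t)\,dt\longrightarrow\int_0^T\Phi(t)\,dt,\qquad \de\to 0+,\]
and then pointwise convergence combined with Pratt's lemma (generalized dominated convergence, taking $g_{\de,\de'}=\tfrac12\left[(T_{t+\de}X_0)^2+(T_{t+\de'}X_0)^2\right]e^{-2|x|}$, whose integrals converge to that of the pointwise limit) yields $\int_0^T\int_\RR T_{t+\de}X_0\,T_{t+\de'}X_0\,e^{-2|x|}dx\,dt\to\int_0^T\int_\RR(T_tX_0)^2e^{-2|x|}dx\,dt$. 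Alternatively, prove $T_{t+\de}X_0\to T_tX_0$ in $L^2([0,T]\times\RR,e^{-2|x|}dx\,dt)$, which is precisely what the paper's estimate accomplishes for this term. With this repair, your proof is complete.
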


\begin{proof}
For any $\delta, \delta'>0$, by the moment duality we have for
$\lambda=\Xi_0(\Delta)$,
\begin{equation*}
\begin{split}
&\EE\int^T_0 dt\int_\RR (T_\de X_t(x)-T_{\de'} X_t(x))^2 e^{-2|x|}dx\\
&=\int^T_0 dt\int_\RR \EE\left(\int_\RR p_\de(x-y) X_t(dy)-\int_\RR p_{\de'}(x-y) X_t(dy)\right)^2 e^{-2|x|}dx\\
&=\int^T_0dt\int_\RR e^{-2|x|}dx\int_0^t\la e^{-\la s}ds\int_\RR X_0(dy)\int_{\RR}p_{t-s}(y-z)\left(p_{s+\de}(x-z)-p_{s+\de'}(x-z)\right)^2dz\\
&\quad +\int^T_0 e^{-\lambda t}dt\int_\RR e^{-2|x|}dx\left(\int_\RR X_0(dy)p_{t+\de}(x-y)-\int_\RR X_0(dy) p_{t+\de'}(x-y)\right)^2.\\
\end{split}
\end{equation*}
For $X_0(dy)=X_0(y)dy$,
\begin{equation*}
\begin{split}
&\int_\RR e^{-2|x|}dx\left(\int_\RR (p_{t+\de}(x-y)- p_{t+\de'}(x-y))X_0(y)dy\right)^2\\
&\leq \int_\RR e^{-2|x|}dx\int_\RR \left(p_{t+\de}(x-y)-
p_{t+\de'}(x-y)\right)^2 e^{|y|}dy\int_{\RR} X_0(y)^2e^{-|y|}dy.
\end{split}
\end{equation*}
We can show that
\begin{equation}\label{1}
\begin{split}
&|p_{t+\de}(z)- p_{t+\de'}(z)| \leq t^{-2}|\de'-\de|
C(T)(1+z^2)(p_{t+\de}(z)+p_{t+\de'}(z)).
\end{split}
\end{equation}
Then for $0<\alpha<1/4$,
\begin{equation*}
\begin{split}
&\int_\RR e^{-2|x|}dx\int_\RR \left(p_{t+\de}(x-y)- p_{t+\de'}(x-y)\right)^{\alpha+2-\alpha} e^{|y|}dy\\
&\leq |\de'-\de|^{\alpha} t^{-2\alpha}C(T,\alpha)\int_\RR e^{-2|x|}dx\int_\RR (1+(x-y)^2)^\alpha\left(p_{t+\de}(x-y)^2+p_{t+\de'}(x-y)^2\right) e^{|y|}dy\\
&\leq |\de'-\de|^{\alpha} t^{-2\alpha-1/2}C(T,\alpha).
\end{split}
\end{equation*}
Consequently,
\[\lim_{\de,\de'\goto 0+}\int^T_0 e^{-\lambda t}dt\int_\RR e^{-2|x|}dx\left(\int_\RR X_0(dy)p_{t+\de}(x-y)-\int_\RR X_0(dy) p_{t+\de'}(x-y)\right)^2=0.\]
By Equation (\ref{1}), for $0<\al<1/4$, we have
\begin{equation*}
\begin{split}
&\int_\RR
X_0(dy)\int_{\RR}p_{t-s}(y-z)\left(p_{s+\de}(x-z)-p_{s+\de'}(x-z)\right)^2dz\\
&\leq{\({2\pi (t-s)}\)^{-1/2}}\int_\RR
X_0(dy)\int_{\RR}\left(p_{s+\de}(x-z)-p_{s+\de'}(x-z)\right)^{\al+2-\al}dz\\
&\leq\(2\pi
(t-s)\)^{-1/2}s^{-2\al}|\delta-\delta^{'}|^{\al}C\(T,\al\)
\int_\RR (1+(x-z)^2)^\alpha\left(p_{s+\de}(x-z)^2+p_{s+\de'}(x-z)^2\right)dz\\
&\leq|\delta-\delta^{'}|^{\al}C\(T,\al\)(t-s)^{-{1}/{2}}s^{-2\al-1/2}.
\end{split}
\end{equation*}

Therefore,
\[\lim_{\de,\de'\goto 0+}\int^T_0dt\int_\RR e^{-2|x|}dx\int_0^t\la e^{-\la s}ds\int_\RR X_0(dy)
\int_{\RR}p_{t-s}(y-z)\left(p_{s+\de}(x-z)-p_{s+\de'}(x-z)\right)^2dz=0.\]
Then
\[\lim_{\de, \de'\to 0+}\EE\int^T_0\int_\RR (T_\de X_t(x)-T_{\de'} X_t(x))^2 e^{-2|x|}dx
dt=0\]
and the desired result follows.
\qed
\end{proof}
Although for a.e. $s$, the function  $h_s(x)$ is defined for a.e.
$x\in\RR$, the quantity $h_s(W^{t,x}_s)$ is a well-defined random
variable when $s>t$ since the law of $W^{t,x}_s$ is absolutely
continuous.
\begin{theorem}
Suppose that $\partial_x u_0\in L^2(\RR,e^{-|x|}dx)$. If
$\{v_t(x)\}$ is a solution to (\ref{eq0915f}) such that $v_t(x)$ is
differentiable in $x$ and
\[\EE\int^T_0\int_\RR (\partial_x v_t(x))^2 e^{-2|x|}dx dt<\infty,\]
then $v_t(x)=Y^{t,x}_t$ a.s., where $\{Y^{t,x}_s\}$ is a solution to the
BTSDE (\ref{eq0915g}).
\end{theorem}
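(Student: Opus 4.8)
The plan is to realize the claimed identity as an instance of the Feynman--Kac correspondence between the backward SPDE (\ref{eq0915f}) and the backward triply stochastic equation (\ref{eq0915g}), following \cite{X}. Fix $t$ and $x$ and set
\[\hat Y_s:=v_s(W^{t,x}_s),\qquad \hat Z_s:=\partial_x v_s(W^{t,x}_s),\qquad t\le s\le T.\]
The goal is to show that the pair $(\hat Y,\hat Z)$ is a solution of the BTSDE (\ref{eq0915g}). Since $W^{t,x}_t=x$ and $v_T(\cdot)=u_{0}(\cdot)$, specializing to $s=t$ then gives $v_t(x)=\hat Y_t=Y^{t,x}_t$, which is exactly the assertion. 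The core of the argument is therefore a forward--backward It\^o formula for the composition of the backward field $v_s(\cdot)$ with the forward Brownian motion $W^{t,x}_s$.

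Because $v_s(\cdot)$ is only assumed differentiable in $x$ with $\EE\int^T_0\int_\RR(\partial_x v_s(x))^2 e^{-2|x|}dx\,ds<\infty$, I would first mollify in space. Let $\phi_\ep$ be a smooth approximate identity and put $v^\ep_s:=v_s*\phi_\ep$. Since the stochastic integrals in (\ref{eq0915f}) are linear and the Laplacian commutes with convolution, $v^\ep_s(\cdot)$ is smooth in $x$ and solves the same backward SPDE with each coefficient replaced by its spatial convolution with $\phi_\ep$ and with drift $\frac{1}{2}\Delta v^\ep_{s+}$. Applying the standard It\^o formula for a smooth function composed with a semimartingale driven by Brownian motion and a Poisson random measure (as in the proof of the extended formula (\ref{eq0916a}); cf.\ Chapter II of \cite{IkWa89}) to $v^\ep_s(W^{t,x}_s)$, the decisive cancellation appears: the two Laplacian contributions---the backward drift $\frac{1}{2}\Delta v^\ep_{r+}$ inherited from (\ref{eq0915f}) and the second-order It\^o correction $\frac{1}{2}\partial_{xx}v^\ep_r$ generated along $W^{t,x}$---cancel, leaving only the first-order term $-\int^T_s\partial_x v^\ep_r(W^{t,x}_r)\,dW_r$. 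The white-noise and Poisson terms carry over directly, since their coefficients $1_{y\le v_{r+}(x)}-v_{r+}(x)$ and $\sum_i z_i(1_{y_i\le v_{r+}(x)}-v_{r+}(x))$ contain no spatial derivative; after mollification they become the $\phi_\ep$-averaged coefficients evaluated at $W^{t,x}_r$. This yields the $\ep$-version of (\ref{eq0915g}) with $Z$ replaced by $\partial_x v^\ep_r(W^{t,x}_r)$.

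It then remains to let $\ep\to 0+$. The absolute continuity of the law of $W^{t,x}_r$ for $r>t$ converts the weighted spatial bounds into convergence of the composed processes: $\partial_x v^\ep_r(W^{t,x}_r)=[(\partial_x v_r)*\phi_\ep](W^{t,x}_r)\to\partial_x v_r(W^{t,x}_r)$ in $L^2([t,T]\times\Omega)$, where the weight $e^{-2|x|}$ is precisely what dominates the Gaussian transition density, as in the lemma preceding this theorem; hence the forward integral converges to $-\int^T_s\hat Z_r\,dW_r$. Likewise $v^\ep_r(W^{t,x}_r)\to v_r(W^{t,x}_r)=\hat Y_r$, using the density furnished by that lemma, and the averaged indicator coefficients $[1_{y\le v_{r+}(\cdot)}*\phi_\ep](W^{t,x}_r)\to 1_{y\le v_{r+}(W^{t,x}_r)}$ in the $L^2$ of the white noise and of the compensated Poisson measure. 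Passing to the limit identifies $(\hat Y,\hat Z)$ as a solution of (\ref{eq0915g}), and evaluation at $s=t$ completes the proof.

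I expect the main obstacle to be this limiting step, and specifically the convergence of the \emph{nonlinear} indicator coefficients $[1_{y\le v_{r+}(\cdot)}*\phi_\ep](W^{t,x}_r)$ to $1_{y\le v_{r+}(W^{t,x}_r)}$ inside the stochastic integrals: mollification does not commute with the indicator, so this must be controlled through the absolute continuity of the law of $W^{t,x}_r$ together with the spatial regularity and density of $v$ supplied by the preceding lemmas, with the integrability against $(\sum_i z_i^2)^{-1}\Xi_0(dz)$ verified uniformly in $\ep$. The second delicate point is the justification of the forward--backward It\^o formula itself, which mixes the backward integrals $\tilde B,\tilde M$ with the forward integral $dW$; this is precisely what the spatial mollification renders tractable, since it makes each $v^\ep_s(\cdot)$ smooth in $x$ and reduces the computation to the classical It\^o formula along $W^{t,x}$.
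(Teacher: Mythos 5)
Your overall strategy is the same as the paper's (mollify in space, expand the composition $v_s(W^{t,x}_s)$, cancel the backward drift $\frac12\Delta v$ against the It\^o correction along $W$, pass to the limit to identify $Y^{t,x}_s=v_s(W^{t,x}_s)$, $Z^{t,x}_s=\partial_x v_s(W^{t,x}_s)$), but there is a genuine gap at the central step. You assert that once $v^\ep_s(\cdot)$ is smooth in $x$, the expansion of $s\mapsto v^\ep_s(W^{t,x}_s)$ "reduces to the classical It\^o formula along $W^{t,x}$." It does not: $v^\ep_s(\cdot)$ is not a deterministic function but a \emph{random field adapted to the backward filtration} generated by $\tilde B$ and $\tilde M$, evolving backward in $s$, while $W^{t,x}_s$ evolves forward. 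Neither the classical It\^o formula nor the paper's extended formula (\ref{eq0916a}) (which is stated for a deterministic $f\in C^2(\RR)$ composed with a semimartingale) covers the composition of a backward-adapted random field with a forward diffusion; spatial smoothness removes none of this difficulty, which is about adaptedness and the two opposite time directions, not regularity. What is needed is precisely an It\^o--Wentzell-type formula for this mixed forward--backward setting, and proving it is the heart of the theorem.

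The paper supplies this missing step by a discretization device that your proposal omits entirely: take a partition $s=t_0<t_1<\cdots<t_n=T$, write $v^\de_s(W^{t,x}_s)-v^\de_T(W^{t,x}_T)$ as a telescoping sum, and split each increment into a \emph{spatial} part $v^\de_{t_i}(W^{t,x}_{t_i})-v^\de_{t_i}(W^{t,x}_{t_{i+1}})$, handled by forward It\^o calculus with the field frozen at time $t_i$ (legitimate because the backward noises driving $v^\de_{t_i}$ are independent of the forward Brownian increments on $[t_i,t_{i+1}]$, which is what the filtration $\cG_t=\si(\cG^1_t,\cG^2_t)$ is designed to encode), and a \emph{temporal} part $v^\de_{t_i}(W^{t,x}_{t_{i+1}})-v^\de_{t_{i+1}}(W^{t,x}_{t_{i+1}})$, handled by the backward SPDE at the frozen spatial point $W^{t,x}_{t_{i+1}}$. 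Letting the mesh and then $\de$ tend to zero produces exactly the cancellation of the two Laplacian terms you describe, but now rigorously. Note also a smaller point: this limit identifies $Y^{t,x}_s=v_s(W^{t,x}_s)$ only for a.e.\ $(s,\om)\in(t,T]\times\Om$, so you cannot simply "specialize to $s=t$"; the paper concludes by using continuity of $Y^{t,x}_s$ in $s$ and continuity of $v_s(W^{t,x}_s)$ at $s=t$ and letting $s\downarrow t$. Your treatment of the $\ep\to0$ limit (via absolute continuity of the law of $W^{t,x}_r$ and the weighted $L^2$ bounds) is consistent with the paper, but without the telescoping argument, or a citation to a proved forward--backward It\^o--Wentzell theorem, the proof is incomplete.
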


\noindent
{\bf Sketch of the proof} \, Since the main idea is the same as that of
Theorem 4.1 in \cite{X}, we only give a sketch.
 Applying $T_\de$ to both sides of (\ref{eq0915f}), we
get for $v^\de_s:=T_\de v_s$
\begin{eqnarray*}
v^\de_t(x)&=&u^\de_0(x)+\int^T_t\int_{[0,1]}\int_\RR
G_1(y,v_{s+}(l))p_\de(x-l)dl\tilde{B}(\hat{d}s dy)\\
&&+\int^T_t\int_{\Delta}\int_{[0,1]^\mathbb{N}}\int_\RR
G_2(y,z,v_{s+}(l))p_\de(x-l)dl\tilde{M}(\hat{d}s dzdy)
+\int^T_t\frac12\De v^\de_{s+}(x)ds,\end{eqnarray*} where $G_1$ and
$G_2$ are the obvious integrands of (\ref{eq0915f}).

Let $s=t_0<t_1<\cdots<t_n=T$ be a
partition of $[s,T]$. Writing $T_\de v_s(W^{t,x}_s)-T_\de
v_T(W^{t,x}_T)$ into telescopic sum, we have
\begin{eqnarray*}
&&v^\de_s(W^{t,x}_s)-T_\de u_0(W^{t,x}_T)\\
&=&\sum^{n-1}_{i=0}\(v^\de_{t_i}(W^{t,x}_{t_i})-v^\de_{t_i}(W^{t,x}_{t_{i+1}})\)
+\sum^{n-1}_{i=0}\(v^\de_{t_i}(W^{t,x}_{t_{i+1}})-v^\de_{t_{i+1}}(W^{t,x}_{t_{i+1}})\)\\
&=&-\sum^{n-1}_{i=0}\int^{t_{i+1}}_{t_i}\frac12\De
v^\de_{t_i}(W^{t,x}_r)dr
-\sum^{n-1}_{i=0}\int^{t_{i+1}}_{t_i}\nabla v^\de_{t_i}(W^{t,x}_r)dW_r\\
&&+\sum^{n-1}_{i=0}\int^{t_{i+1}}_{t_i}\int_{[0,1]}\int_\RR
p_\de(W^{t,x}_{t_{i+1}}-l)G_1(z,v_{r+}(l))dl\tilde{B}(\hat{d}r dz)\\
&&+\sum^{n-1}_{i=0}\int^{t_{i+1}}_{t_i}\frac12\De
v^\de_{r+}(W^{t,x}_{t_{i+1}})dr\\
&&+\sum^{n-1}_{i=0}\int^{t_{i+1}}_{t_i}\int_{\Delta}\int_{[0,1]^\mathbb{N}}\int_\RR
p_\de(W^{t,x}_{t_{i+1}}-l)G_2(y,z,v_{r+}(l))dl\tilde{M}(\hat{d}r
dzdy).
\end{eqnarray*}
Letting the mesh size decrease to zero and taking $\de\to 0$, we see
that
\[Y^{t,x}_s=v_s(W^{t,x}_s)\mbox{ and
}Z^{t,x}_s=\partial_xv_s(W^{t,x}_s),\qquad
a.e.\;\;(s,\om)\in(t,T]\times\Om,\] is a solution to the BTSDE
(\ref{eq0915g}). By the continuity of $Y^{t,x}_s$ in $s$ and the continuity of
$v_s(W^{t,x}_s)$ at $s=t$, taking $s\downarrow t$, we get the
conclusion of the theorem. \qed

Applying Yamada-Watanabe's argument, we can get the following result.
\begin{theorem}
The BTSDE (\ref{eq0915g}) has at most one solution.
\end{theorem}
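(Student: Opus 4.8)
The plan is to establish pathwise uniqueness for (\ref{eq0915g}) by a Yamada--Watanabe argument, following Xiong \cite{X}. Suppose $(Y^1,Z^1)$ and $(Y^2,Z^2)$ are two solutions of (\ref{eq0915g}) with the common terminal value $u_0(W^{t,x}_T)$, and set $\xi_s:=Y^1_s-Y^2_s$ and $\zeta_s:=Z^1_s-Z^2_s$. Subtracting the two copies of (\ref{eq0915g}) shows that $\xi$ solves a BTSDE with $\xi_T=0$, with coefficient $G_1(s,y)=\sigma\big((1_{y\le Y^1_{s+}}-1_{y\le Y^2_{s+}})-\xi_{s+}\big)$ against $\tilde B$, coefficient $G_2(s,z,y)=\sum_i z_i(1_{y_i\le Y^1_{s+}}-1_{y_i\le Y^2_{s+}})-(\sum_i z_i)\xi_{s+}$ against $\tilde M$, and integrand $-\zeta_s$ against $dW$. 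I would then introduce the standard Yamada--Watanabe sequence $\phi_n\in C^2(\RR)$ with $\phi_n\uparrow|\cdot|$, $0\le\phi_n'\le 1$, $\phi_n''\ge 0$, $\phi_n''$ supported on $[a_n,a_{n-1}]\cup[-a_{n-1},-a_n]$ with $a_n\downarrow 0$, and $|x|\,\phi_n''(x)\le 2/n$, and apply the extended It\^o formula (\ref{eq0916a}) to $\phi_n(\xi_s)$.

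Taking expectations, each genuine stochastic integral (the $\tilde B$-, $\tilde M$- and $dW$-terms) is a martingale and vanishes, leaving
\[\EE\phi_n(\xi_t)=\EE\int^T_t\!\int_\Delta\!\int_{[0,1]^\mathbb{N}}\!\Big(\phi_n(\xi_{s+}+G_2)-\phi_n(\xi_{s+})-\phi_n'(\xi_{s+})G_2\Big)\mu_2(dz\,dy)\,ds+\tfrac12\EE\!\int^T_t\!\phi_n''(\xi_s)\!\int^1_0\! G_1^2\,dy\,ds-\tfrac12\EE\!\int^T_t\!\zeta_s^2\,\phi_n''(\xi_s)\,ds.\]
The last term is $\le 0$ since $\phi_n''\ge0$, so it may be dropped. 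For the middle (white-noise) term one computes the Fleming--Viot resampling quadratic form $\int^1_0 G_1^2\,dy=\sigma^2|\xi_s|(1-|\xi_s|)$, whence $\tfrac12\phi_n''(\xi_s)\int^1_0 G_1^2\,dy\le\tfrac{\sigma^2}{2}\phi_n''(\xi_s)|\xi_s|\le\sigma^2/n$; integrating gives a contribution bounded by $\sigma^2(T-t)/n\to 0$. This part is routine.

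The genuine obstacle is the jump (compensator) term. The decisive structural fact is that the resampling map $\Psi_{z,y}(w)=\sum_i z_i 1_{y_i\le w}+(1-\sum_i z_i)w$ is \emph{nondecreasing} in $w$, so at every jump $Y^j_s=\Psi_{z,y}(Y^j_{s+})$ and the sign of $\xi$ is preserved: $\mathrm{sgn}(\xi_s)=\mathrm{sgn}(\xi_{s+})$ whenever the jump fires (with $\xi_s=\xi_{s+}=0$ when $\xi_{s+}=0$). Consequently the integrand converges pointwise, as $n\to\infty$, to $|\xi_{s+}+G_2|-|\xi_{s+}|-\mathrm{sgn}(\xi_{s+})G_2=0$ by this order preservation. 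To pass the limit inside I would use that the integrand is nonnegative by convexity, that integrating out $y$ yields $\int_{[0,1]^\mathbb{N}}G_2^2\,(dy)^{\otimes\mathbb{N}}=|\xi_{s+}|(1-|\xi_{s+}|)\sum_i z_i^2$ and hence the $L^2$-control $\int_\Delta\int G_2^2\,\mu_2\le \Xi_0(\Delta)\,|\xi_{s+}|$, and that on each sign-definite half-line a Taylor estimate of the Bregman divergence of $\phi_n$ gives a dominating function; dominated convergence then yields that this term tends to $0$. I expect the main difficulty to lie precisely here, because $\mu_2(dz\,dy)=(\sum_i z_i^2)^{-1}\Xi_0(dz)(dy)^{\otimes\mathbb{N}}$ is an infinite measure (finiteness fails as $\sum_i z_i^2\to 0$), so the crude uniform bound $\big||x|-\phi_n(x)\big|\le a_{n-1}$ cannot be integrated directly; the resolution is exactly the order-preservation identity combined with the $L^2$ bound $\int G_2^2\,\mu_2\lesssim|\xi|$. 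Having disposed of all three terms, letting $n\to\infty$ gives $\EE|\xi_t|=\lim_n\EE\phi_n(\xi_t)\le 0$, so $Y^1_t=Y^2_t$ a.s.\ for every $t$; substituting back into (\ref{eq0915g}) and applying the It\^o isometry to the $dW$-integral then forces $Z^1=Z^2$ as well, completing the proof.
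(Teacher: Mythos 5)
Your proof is correct and follows essentially the same route as the paper: a Yamada--Watanabe sequence $\phi_k$, the extended It\^o formula (\ref{eq0916a}) applied to $\phi_k(Y^1_t-Y^2_t)$, discarding the $(Z^1_s-Z^2_s)^2$ term by convexity, and controlling both second-order terms through the coefficient estimate $\int_{U_i}|G_i(u,y_1)-G_i(u,y_2)|^2\mu_i(du)\le K|y_1-y_2|$ together with the property $|z|\phi_k''(z)\le 2k^{-1}$. If anything, your treatment of the jump compensator is more scrupulous than the paper's, which tacitly bounds the Bregman increment by $\tfrac12\phi_k''(Y^1_s-Y^2_s)$ times the squared jump (i.e.\ evaluates $\phi_k''$ at the pre-jump difference rather than at an intermediate point); the order-preservation of the resampling map that you isolate, so that $Y^1_s-Y^2_s$ and its post-jump value share the same sign, is exactly the structural fact needed to justify that step, whether one finishes by your dominated-convergence argument or by the paper's uniform $O(k^{-1})$ bound.
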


\begin{proof} We drop the superscript $(t,x)$ in (\ref{eq0915g}) for simplicity. Suppose that
(\ref{eq0915g}) has two solutions $(Y^i_t,Z^i_t),\;\;i=1,\;2$. Let
$(a_k)$ be a sequence of decreasing positive numbers  defined recursively by
\[a_0=1\mbox{ and } \int^{a_{k-1}}_{a_k}z^{-1}dz=k,\qquad k\ge 1.\]
Let $\psi_k$ be non-negative  continuous functions supported in
$(a_k,a_{k-1})$ satisfying
\[\int^{a_{k-1}}_{a_k}\psi_k(z)dz=1\mbox{ and }\psi_k(z)\le
2(kz)^{-1},\qquad\forall\ z\in\RR.\] Let
\[\phi_k(z)=\int^{|z|}_0dy\int^y_0\psi_k(x)dx,\qquad\forall\ z\in\RR.\]
Then, $\phi_k(z)\to|z|$ and $|z|\phi''_k(z)\le 2k^{-1}$.

Denote $U_1=[0,1]$ and $U_2=\De\times[0,1]^\mathbb{N}$. Let $G_1$
and $G_2$ be the integrands for the backward stochastic integrals in
(\ref{eq0915g}). Note that
\begin{eqnarray*}
Y^1_t-Y^2_t&=&\int^T_t\int_{U_1}
\(G_1(u,Y^1_s)-G_1(u,Y^2_s)\)B(\hat{d}s
du)-\int^T_t\(Z^1_s-Z^2_s\)dW_s\nonumber\\
&&+\int^T_t\int_{U_2}
\(G_2(u,Y^1_s)-G_2(u,Y^2_s)\)\tilde{M}(\hat{d}s du).\end{eqnarray*}
By It\^o's formula, we get
\begin{eqnarray}\label{eq0902c}
&&\phi_k(Y^1_t-Y^2_t)\nonumber\\
&=&\int^T_t\int_{U_1}\phi'_k(Y^1_s-Y^2_s)\(G_1(u,Y^1_s)-G_1(u,Y^2_s)\)B(\hat{d}s
du)\nonumber\\
&&-\int^T_t\phi'_k(Y^1_s-Y^2_s)\(Z^1_s-Z^2_s\)dW_s\nonumber\\
&&+\int^T_t\int_{U_2}\(\phi_k(Y^1_{s+}-Y^2_{s+}+G_2(u,Y^1_{s+})-G_2(u,Y^2_{s+}))
-\phi_k(Y^1_{s+}-Y^2_{s+})\)\tilde{M}(\hat{d}s
du)\nonumber\\
&&+\frac12\int^T_t\int_{U_1}\phi''_k(Y^1_s-Y^2_s)\(G_1(u,Y^1_s)-G_1(u,Y^2_s)\)^2\mu_1(du)
ds\nonumber\\
&&+\int^T_t\int_{U_2}\bigg(\phi_k(Y^1_{s}-Y^2_{s}+G_2(u,Y^1_{s})-G_2(u,Y^2_{s}))\nonumber\\
&&\qquad -\phi_k(Y^1_{s}-Y^2_{s})-\phi_k'(Y^1_{s+}-Y^2_{s+})(G_2(u,Y^1_{s})-G_2(u,Y^2_{s}))\bigg)\mu_2(du)ds\nonumber\\
&&-\frac12\int^T_t\phi''_k(Y^1_s-Y^2_s)\(Z^1_s-Z^2_s\)^2ds.
\end{eqnarray}
Note that
\[\int_{U_i}|G_i(u,y_1)-G_i(u,y_2)|^2\mu_i(du)\le K|y_1-y_2|.\]
Taking expectation on both sides of (\ref{eq0902c}), we  get
\[
\EE\phi_k(Y^1_t-Y^2_t)\le
K\EE\int^T_t\phi''_k(Y^1_s-Y^2_s)|Y^1_s-Y^2_s|ds\le KTk^{-1}.
\]
Taking $k\to\infty$ and making use of Fatou's lemma, we have
\[\EE|Y^1_t-Y^2_t|\le 0.\]
Therefore, $Y^1_t=Y^2_t$ a.s. The uniqueness for $Z$ follows easily.
\qed
\end{proof}

As a consequence of the last two theorems, we see that the SPDE
(\ref{eq0915e}) has a unique strong solution.

\section{Appendix}
We consider the case of $E=\left\{e_1,e_2\right\}$ with
$\nu_0(e_1)=\nu_0(e_2)=1/2$ in this Appendix. Let $F=\left\{e_1\right\}$.

For convenience, we denote
$a_2=\be_{2;2;0}+\si^2$, $a_i=\be_{i;i;0},\;i=3,4,5,6$,
$a_{21}=\be_{3;2;1}+\sigma^2$, $a_{211}=\be_{4;2;2}+\si^2$,
$a_{22}=\be_{4;2,2;0}$, $a_{31}=\be_{4;3;1}$,
$a_{2111}=\be_{5;2;3}+\si^2$, $a_{221}=\be_{5;2,2;1}$,
$a_{311}=\be_{5;3;2}$, $a_{32}=\be_{5;3,2;0}$, $a_{41}=\be_{5;4;1}$,
$a_{21111}=\be_{6;2;4}+\si^2$, $a_{2211}=\be_{6;2,2;2}$,
$a_{3111}=\be_{6;3;3}$, $a_{222}=\be_{6;2,2,2;0}$,
$a_{321}=\be_{6;3,2;1}$, $a_{411}=\be_{6;4;2}$,
$a_{33}=\be_{6;3,3;0}$, $a_{42}=\be_{6;4,2;0}$,
$a_{51}=\be_{6;5;1}$. Let
$$\mathbf{m}_i=\int_{M_1(E)}\mu^{i}(\left\{e_1\right\})\Pi(d\mu),$$
where $i=1,2,\ldots,6$ and $\Pi$ is the invariant measure. By the
consistency condition, we have
\begin{eqnarray}\label{con26}
\begin{cases}
a_{21}=a_2-a_3,\\
a_{22}=a_2-2a_3+a_4-a_{211},\\
a_{31}=a_3-a_4,\\
a_{41}=a_4-a_5,\\
a_{221}=\frac15a_{{2}}-\frac45a_{{3}}+a_{{4}}+\frac15a_{{211}}-\frac25a_{{2111}}-\frac25
a_{{5}},\\
a_{311}=\frac35a_{{211}}-\frac25a_{{2}}+\frac85a_{{3}}-2a_{{4}}-\frac15a_{{2111}}+\frac4
5a_{{5}},\\
a_{32}=-\frac35a_{{3}}+\frac15a_{{2111}}+\frac15a_{{5}}-\frac35a_{{211}}+\frac25a_{{2}},\\
a_{2211}=-\frac
{14}{15}a_{{3}}+a_{{4}}+\frac25a_{{2}}-\frac45a_{{5}}-\frac23a_{{
33}}-a_{{42}}-\frac35a_{{211}}+\frac {8}{15}a_{{2111}}-\frac13a_{{
21111}}+\frac13a_{{6}},\\
a_{222}=2\,a_{{211}}-a_{{2}}+\frac43\,a_{{3}}+\frac83\,a_{{33}}+3\,a_{{42}}-\frac43\,a_{{
2111}}+\frac13\,a_{{21111}}-\frac13\,a_{{6}},\\
a_{3111}=-\frac35\,a_{{2111}}+{\frac
{14}{5}}\,a_{{3}}-3\,a_{{4}}-\frac65\,a_{{2}}+{ \frac
{12}{5}}\,a_{{5}}+2\,a_{{33}}+3\,a_{{42}}+\frac95\,a_{{211}}-a_{{6}},\\
a_{321}=\frac25\,a_{{2}}-\frac35\,a_{{3}}+\frac15\,a_{{5}}-a_{{33}}-a_{{42}}-\frac35\,a_{{211}
}+\frac15\,a_{{2111}},\\
a_{411}=a_{{4}}-2\,a_{{5}}-a_{{42}}+a_{{6}},\\
a_{51}=a_5-a_6.\\
\end{cases}
\end{eqnarray}

Taking $p=1$, $q=0$ and $f=1_F$ in Equation (\ref{eq0915c}), we get
\begin{equation}\label{11231}
\mathbf{m}_1=\al=1/2.
\end{equation}

Taking $p=2$, $q=0$ and $f=1_{F\times F}$ in Equation
(\ref{eq0915c}), we get
\begin{equation}\label{11232}
\mathbf{m}_2
={\frac { a_{{2}}+\theta\alpha}{a_{{2}}+\theta} }
\mathbf{m}_1
=\frac{2a_2+\theta}{4\(a_2+\theta\)}.
\end{equation}

Taking $p=3$, $q=0$ and $f=1_{F\times F\times F}$ in Equation
(\ref{eq0915c}), we get
\begin{equation}\label{11233}
\begin{split}
\mathbf{m}_3
=&{\frac {a_{{3}}\mathbf{m}_1+ \left(
3\,a_{{21}}+\frac32\,\theta\,\alpha
\right)\mathbf{m}_2}{3\,a_{{21}}+a_{{3}}+\frac32\,\theta}}
=\frac{4a_2+\theta}{8\(a_2+\theta\)},
\end{split}
\end{equation}
 where we have applied the consistency condition $a_{21}=a_2-a_3$ and canceled the common positive factor $$3a_2-2a_3+\frac32\theta=3a_{21}+a_3+\frac32\theta$$ from both the denominator and the numerator.

Taking $p=4$, $q=0$ and $f=1_{F\times F\times
F\times F}$ in Equation (\ref{eq0915c}), we get
\begin{equation}\label{11234}
\begin{split}
\mathbf{m}_4
=&{\frac {a_{{4}}\mathbf{m}_1+ \left(
4\,a_{{31}}+3\,a_{{22}} \right)\mathbf{m}_2 + \left(
6\,a_{{211}}+2\,\theta\,\alpha \right)\mathbf{m}_3}{6\,a_{{211}}+
3\,a_{{22}}+4\,a_{{31}}+a_{{4}}+2\,\theta}}\\
=&\frac{\theta^2-4a_3\theta+10a_2\theta+2a_4\theta+12a_{211}a_2+12a_2^2-8a_3a_2}{8\(3a_{211}+3a_2-2a_3+2\theta\)\(a_2+\theta\)}\\
=&\frac{\theta^2-4a_3\theta+10a_2\theta+2a_4\theta+12a_{211}a_2+12a_2^2-8a_3a_2}{8\(6\,a_{{211}}+
3\,a_{{22}}+4\,a_{{31}}+a_{{4}}+2\,\theta\)\(a_2+\theta\)},
\end{split}\end{equation}
where $3a_{211}+3a_2-2a_3+2\theta=6\,a_{{211}}+
3\,a_{{22}}+4\,a_{{31}}+a_{{4}}+2\,\theta$ follows from the consistency condition.\\

Taking
$p=1$, $q=3$, $f=1_F$ and $g=1_{F\times F\times F}$ in Equation
(\ref{eq0915c}), we get
\begin{equation}\label{121123e2}
\begin{split}
\(3a_{{21}}+\theta\,\alpha\)\mathbf{m}_3+a_{{3}}\mathbf{m}_2-\(3a_{21}+a_{{3}}+\th\)\mathbf{m}_4=0.
\end{split}\end{equation}
Plugging in Equations (\ref{11232})-(\ref{11234}), Equation (\ref{121123e2}) becomes
\begin{equation*}
\begin{split}
\frac{\theta B}{\(3a_{211}+3a_2-2a_3+2\theta\)\(a_2+\theta\)}
&=\frac{\theta B}{\(6\,a_{{211}}+3\,a_{{22}}+4\,a_{{31}}+a_{{4}}+2\,\theta\)\(a_2+\theta\)} \\
&=0,
\end{split}
\end{equation*}
where
\[B:=\(-6a_3+3a_2-3a_{211}+4a_4\)\theta+12a_4a_2-22a_3a_2+6a_3a_{211}-6a_{211}a_2-8a_4a_3+12a_3^2+6a_2^2.\]
By the consistency condition, we have
$$-6a_3+3a_2-3a_{211}+4a_4=a_4+3a_{22}.$$
If $a_4+3a_{22}=0$, then the $\Xi$-coalescent
 degenerates to Kingman's coalescent  by Lemma \ref{lem0928a};
otherwise, $a_4+3a_{22}>0$ and we have
\begin{eqnarray}\label{theta}
\begin{split}
\theta
=&{\frac
{2\(6\,a_{{4}}a_{{2}}-11\,a_{{3}}a_{{2}}+3\,a_{{3}}a_{{211}}-3
\,a_{{211}}a_{{2}}-4\,a_{{4}}a_{{3}}+6\,{a_{{3}}}^{2}+3\,{a_{{2}}}^{2}
\)}{-a_4-3a_{22}}},
\end{split}
\end{eqnarray}
which further imposes a necessary condition for the $(\Xi, A)$-Fleming-Viot process in case of $E=\left\{e_1,e_2\right\}$ with $\nu_0(\{e_1\})=\nu_0(\{e_2\})=1/2$.

With Equation (\ref{theta}) we can compute higher moments  to reach a contradiction.
Taking $p=5$, $q=0$ and $f=1_{F\times F\times F\times F\times F}$ in
Equation (\ref{eq0915c}), we get
\begin{equation}\label{11235}
\begin{split}
\mathbf{m}_5&=\frac {a_{{5}}\mathbf{m}_1+ \left(
10\,a_{{32}}+5\,a_{{41}} \right)\mathbf{m}_2+ \left(
15\,a_{{221}}+10\,a_{{311}} \right)\mathbf{m}_3+ \left( 10\,a_{{
2111}}+\frac52\,\theta\,\alpha
\right)\mathbf{m}_4}{10\,a_{{2111}}+15\,a_{{221
}}+10\,a_{{311}}+10\,a_{{32}}+5\,a_{{41}}+a_{{5}}+\frac52\,\theta}\\
&=\frac{\(\theta-6a_{211}+10a_4+20a_2-16a_3\)\theta-16a_3a_2+24a_2^2+24a_{211}a_2}{16\(a_2+\theta\)\(3a_{211}+3a_2-2a_3+2\theta\)}\\
&=\frac{\(\theta-6a_{211}+10a_4+20a_2-16a_3\)\theta-16a_3a_2+24a_2^2+24a_{211}a_2}{16\(a_2+\theta\)\(6\,a_{{211}}+
3\,a_{{22}}+4\,a_{{31}}+a_{{4}}+2\,\theta\)},
\end{split}\end{equation}
where we have applied the consistency condition and canceled a common positive factor
$$4a_{2111}+3a_2-2a_3+3a_{211}+\frac52\theta
=10a_{2111}+15a_{221}+10a_{311}+10a_{32}+5a_{41}+a_5+\frac52\theta$$
from both the denominator and numerator.

Taking $p=6$, $q=0$ and $f=1_{F\times F\times F\times F\times
F\times F}$ in Equation (\ref{eq0915c}), we get
\begin{equation}\label{11236}
\begin{split}
\mathbf{m}_6=&\frac {a_{{6}}\mathbf{m}_1+ \left(
6\,a_{{51}}+15\,a_{{42}}+10\,a_{{33}}
 \right) \mathbf{m}_2+\left( 15\,a_{{411}}+60\,a_{{321}}+15\,a_{{222}}
 \right) \mathbf{m}_3}{C}\\
 &+\frac{\left( 20\,a_{{3111}}+45\,a_{{2211}} \right)\mathbf{m}_4 +
\left( 15\,a_{{21111}}+3\,\theta\,\alpha \right)
\mathbf{m}_5}{C},
\end{split}
\end{equation}
where
\[C:=a_{{6}}+6
\,a_{{51}}+15\,a_{{42}}+10\,a_{{33}}+15\,a_{{411}}+60\,a_{{321}}+15\,a
_{{222}}+20\,a_{{3111}}+45\,a_{{2211}}+15\,a_{{21111}}+3\,\theta.\]

Taking $p=1$, $q=5$, $f=1_F$ and $g=1_{F\times F\times F\times
F\times F}$ in Equation (\ref{eq0915c}), we get
\begin{equation}\label{121123e5}
\begin{split}
&a_{{5}}\mathbf{m}_2+ \left( 10\,a_{{32}}+5\,a_{{41}} \right)
\mathbf{m}_3+
 \left( 10\,a_{{311}}+15\,a_{{221}} \right) \mathbf{m}_4+ \left( 10\,a_{{
2111}}+2\,\theta\,\alpha \right) \mathbf{m}_5\\
&- \left( a_{{5}}+10\,a_{{32}}+
5\,a_{{41}}+10\,a_{{311}}+15\,a_{{221}}+10\,a_{{2111}}+2\,\theta
 \right)\mathbf{m}_6=0.
\end{split}\end{equation}
Plugging in Equations (\ref{11232}), (\ref{11233}), (\ref{11234}), (\ref{11235}), (\ref{11236}) and (\ref{theta}), Equation (\ref{121123e5})  only involves the collision rates. We can then consider examples of coalescents in which those rates are specified.

\noindent
{\bf Proof of Proposition \ref{beta}}
Let $\mathbb{M}_n=\int_0^1x^n\Xi\(dx\)$. It follows that
\begin{equation*}
\mathbb{M}_n=\frac{\Gamma\(n+2-\be\)}{\(n+1\)!\Gamma\(2-\be\)}.
\end{equation*}
By integral representation  of the coalescence rates, we have
$$a_2=\mathbb{M}_0, a_3=\mathbb{M}_1, a_4=\mathbb{M}_2, a_{211}=\mathbb{M}_0-2\mathbb{M}_1+\mathbb{M}_2,$$
$$a_5=\mathbb{M}_3, a_{2111}=\mathbb{M}_0-3\mathbb{M}_1+3\mathbb{M}_2-\mathbb{M}_3, a_{42}=0,$$ $$a_{33}=0,
a_6=\mathbb{M}_4,
a_{21111}=\mathbb{M}_0-4\mathbb{M}_1+6\mathbb{M}_2-4\mathbb{M}_3+\mathbb{M}_4.$$
The value of $\theta$ can be obtained by Equation (\ref{theta}).
Plugging in the values of coalescence rates and $\theta$ to Equation
(\ref{121123e5}), we have
\begin{eqnarray*}
 \frac{\left( \beta-2 \right)  \left( \beta-3
 \right)  \left( \beta+1 \right)  \left( {\beta}^{4}+8\,{\beta}^{3}-39
\,{\beta}^{2}+6\,\beta+72 \right) {\beta}^{2}}
{\(\be^2+3\)\(\be^4+6\be^3-\be^2-126\be-72\)}=0.\\
\end{eqnarray*}
We can not find any $\be\in(0,2)$ to satisfy the above equation,
which contradicts the reversibility. Therefore, the
$(\text{Beta}(2-\be,\be),A)$-Fleming-Viot process is not reversible.
\qed

\noindent
{\bf Proof of Proposition \ref{alpha}}  Let
$\mathbb{M}_n=\int_0^1x^n\Xi\(dx\)$. We have
\begin{equation*}
\mathbb{M}_n=1/\(n+1-\gamma\).
\end{equation*}
The values of $a_2$, $a_3$, $a_4$, $a_{211}$, $a_5$, $a_{2111}$,
$a_6$, $a_{21111}$ and $\theta$ can be similarly expressed in terms
of $\gamma$. Note that $a_{42}=a_{33}=0$. With the consistency
condition (\ref{con26}) and the moments obtained from Equations
(\ref{11231})-(\ref{11234}), (\ref{11235})-(\ref{11236}), we plug in those values to Equation
(\ref{121123e5}). It follows that
\begin{eqnarray*}
{\frac {-1+\gamma}{ \left( -4+\gamma \right)  \left( -6+\gamma
 \right)  \left( {\gamma}^{3}-14\,{\gamma}^{2}+61\,\gamma-120 \right)
}}=0.
\end{eqnarray*}
There is no $\gamma\in(0,1)$ satisfying the above equation.
Therefore, such a $(\Xi,A)$-Fleming-Viot process  is not reversible.
\qed

\noindent
{\bf Proof of Proposition \ref{star}}
If $\Xi=\delta_1$, the corresponding coalescent  only allows
all the blocks to merge into a single block. Then we have $a_2=1$,
$a_3=1$, $a_{21}=0$, $a_4=1$, $a_{31}=0$, $a_{22}=0$ and
$a_{211}=0$. By Equations (\ref{11232})-(\ref{11234}), the moments
 $\mathbf{m}_2$, $\mathbf{m}_3$ and $\mathbf{m}_4$ can be expressed
by $\theta$. Plugging in these values to Equation (\ref{121123e2}),
we have
\begin{equation*}
\frac{\theta^2}{\(1+\theta\)\(1+2\theta\)}=0.
\end{equation*}
Since $\theta$ is positive, we get a contradiction. Therefore, the
corresponding $\(\Xi,A\)$-Fleming-Viot process is not reversible.
\qed

\noindent
{\bf Proof of Proposition \ref{Poi-Dir}}
By Equation (\ref{12320}), the coalescence rates $a_2$, $a_3$,
$a_{21}$, $a_{4}$, $a_{31}$, $a_{22}$, $a_{211}$, $a_5$, $a_{41}$,
$a_{32}$, $a_{311}$, $a_{221}$, $a_{2111}$, $a_6$, $a_{51}$,
$a_{42}$, $a_{33}$, $a_{411}$, $a_{321}$, $a_{222}$, $a_{3111}$,
$a_{2211}$ and $a_{21111}$ are all available. Then the moments
$\mathbf{m}_1$, $\mathbf{m}_2$, $\mathbf{m}_3$, $\mathbf{m}_4$,
$\mathbf{m}_5$ and $\mathbf{m}_6$ can be expressed by $\epsilon$ and
$\theta$. The value of $\theta$ can be obtained by Equation
(\ref{theta}). Replacing all of these values in Equation
(\ref{121123e5}), it follows that
\begin{equation*}
\frac{\(10\epsilon^2+11\epsilon+6\)\epsilon^2}
{\(5\epsilon+6\)\(6+11\epsilon\)\(17\epsilon^4+109\epsilon^3+319\epsilon^2+394\epsilon+120\)\(1+\epsilon\)}=0.
\end{equation*}
We can not find any positive $\epsilon$ to satisfy the above
equation. Therefore, the corresponding $\(\Xi,A\)$-Fleming-Viot
process is not reversible. \qed

\noindent

\end{document}